\theoremstyle{plain}
\newtheorem{thm}{Theorem}
\newtheorem{lem}[thm]{Lemma}
\newtheorem{cor}[thm]{Corollary}
\theoremstyle{definition}
\theoremstyle{remark}
\newtheorem*{rem*}{Remark}
\newcommand{\R}{\mathbb{R}}
\newcommand{\E}{\mathbb{E}}
\newcommand{\C}{\mathbb{C}}
\newcommand{\D}{\mathbb{D}}
\renewcommand{\H}{\mathbb{H}}
\renewcommand{\leq}{\leqslant}
\renewcommand{\le}{\leq}
\renewcommand{\geq}{\geqslant}
\renewcommand{\ge}{\geq}
\DeclareMathOperator{\re}{Re}
\DeclareMathOperator{\im}{Im}
\newcommand{\laplace}{\mathcal{L}}
\newcommand{\pref}[1]{(\ref{#1})}
\def\({\left(}
\def\){\right)}
\def\[{\left[}
\def\]{\right]}
\def\<{\langle}
\def\>{\rangle}
\title{Hitting half-spaces or spheres by the Ornstein-Uhlenbeck type diffusions
\footnotetext{2010 MS Classification:
                   Primary 60J45; Secondary 60G15, 60G40.
   {\it Key words and phrases}: harmonic measure, Ornstein-Uhlenbeck diffusion, Girsanov theorem, hyperbolic spaces, Poisson kernel.\\
   Research supported by Polish Ministry of Science and Higher Eduction grant N N201 3731 36 and l'Agence Nationale de la
Recherche grant no. ANR-09-BLAN-0084-01.} 
}
\author{Tomasz Byczkowski, Jakub Chorowski, Piotr Graczyk, Jacek Ma\l{}ecki}
\begin{document}
\maketitle

\abstract{The purpose of the paper is to provide a general method for computing hitting distributions 
of some regular subsets $D$ for Ornstein-Uhlenbeck type operators of the form $\frac{1}{2}\Delta + F\cdot\nabla$,
with $F$ bounded and orthogonal to the boundary of $D$.
As an important application we obtain integral representations of the Poisson kernel for a half-space and balls for hyperbolic Brownian motion and for the classical Ornstein-Uhlenbeck process.
The method developed in the paper is based on stochastic calculus and on skew product representation of multidimensional Brownian motion and yields more complete results as those based on Feynmann-Kac
technique.
}
\section{Introduction}

A detailed knowledge of the hitting distribution (equivalently: harmonic measure) of a domain for a diffusion with the given generator $\cal{A}$ is fundamental for solving
many potential-theoretic problems, e.g. Dirichlet boundary problem for a domain or Harnack inequality or even boundary Harnack inequality for harmonic functions with respect to $\cal{A}$.

In the paper is we compute hitting distributions of some subsets $D$ for operators of the form $\frac{1}{2}\Delta + F\cdot\nabla$ on subsets of $\R^{n}$. 
It is worth to point out that even in the case of the classical Ornstein-Uhlenbeck diffusion the explicit formulas for half-spaces or balls were obtained only quite 
recently (see \cite{AliliPatie:2005} and \cite{GraczykJakubowski:2008}).
 Although the inspiration for our work comes from the paper \cite{CranstonZhao:1987}, where the potential theory for bounded sets $D$ and  the operators
\begin{equation} \label{O-U-basic}
\frac{1}{2}\Delta + F\cdot\nabla
\end{equation}
was established, the purpose as well as most of technical tools 
are here different: instead of setting up a general theory, we focus on providing explicit formulas for  hitting distributions for some important operators of the above type and sets $D$. The importance of explicit formulas is highlited e.g. in the recent papers \cite{BMR3:2011} and \cite{HM:2011}, where precise asymptotics for Poisson kernel for Bessel diffusions were obtained.
Throughout the paper we assume that the vector field $F$ in \pref{O-U-basic} is bounded and orthogonal to the boundary of $D$. 
The method developed here is based on stochastic calculus and 
Girsanov's theorem and consists in computing various integral functionals of Brownian motion and representing them in terms of special functions. 

We provide a closed formulas for 
 the density function of hitting ditribution, i.e. Poisson kernel of a half-space or a ball for the  hyperbolic Brownian motion or for the classical Ornstein-Uhlenbeck  process. 
The importance of the hyperbolic Brownian motion stems from the fact that it is the canonical diffusion on hyperbolic spaces; it has also some important applications in the risk theory in financial mathematics, see \cite{D:1990} and \cite{Yor:1992}.
Explicit integral representations 
are crucial in obtaining   estimates of Poisson kernel and also of the Green function \cite{BGS:2007},  \cite{BMZ:2010}.
In these papers, the  main tool was the Feynman-Kac formula, applied  to describe the distribution of a stopped multiplicative functional.
The present approach, based on methods related to Girsanov's theorem 
enable us to obtain representation formulas for the Poisson kernel, different from those mentioned above.
The advantage of this approach is seen in Theorem 4, where we obtain the precise asymptotics of the Poisson kernel
for large values of parameters. Another, worth mentioning result is the Theorem 6, where we provide adequate representation 
of Poisson kernel of a ball. Also the formula for the Poisson kernel of a ball for the classical Ornstein-Uhlenbeck 
diffusion is more complete than obtained in \cite{GraczykJakubowski:2008} (as a series representation only).

The paper is organized as follows. In Section 2 we provide the general framework for the next sections. 
Throughout the paper we assume that in \pref{O-U-basic} we deal with potential vector field $F$ on $D$, orthogonal to the boundary. 
Under this assumption, with the aid of stochastic calculus and Girsanov's theorem, we establish a general
formula for the harmonic measure of the set $D$ (Theorem 2).

In Section 3 we provide a closed formula for the Poisson kernel $P(x_n,y)$  of half-space for the hyperbolic 
Brownian motion on the real hyperbolic space $\H^n$ (Theorem 3) and provide an asymptotic formula for $P(x_n,y)$ (Theorem 4).
In Section 4 we provide an integral representation of Poisson kernel for centered balls for hyperbolic Brownian motion on the ball model $\D^n$.  We remark here that a similar representation  from the paper \cite{BM:2007} depends on additional conjectures on the zeros
of some hypergeometric functions which so far remained unsettled. The important tool here, as in the next section, is the skew-product representation of the n-dimensional Brownian motion.
In Section 5 we provide an integral representation for Poisson kernel of a ball for the classical Ornstein-Uhlenbeck process
(Theorem 7).
In Appendix we collect some useful information on Bessel functions, hypergeometric and Legendre functions and on skew-product of n-dimensional Brownian motion.

\section{Change of measure due to Girsanov theorem}
\label{sec:harmonic}
\textbf{Notation:}
For $n>2$ we denote by $\R^n$  the $n$-dimensional Euclidean space, $\<x,y\>$ denotes the standard inner product of $x, y\in \R^n$ and by $|x|$ we denote the Euclidean length of a vector $x\in\R^n$. 
The ball with center at zero and the radius $r$ is written as $B_r=\{x\in \R^n: |x|<r\}$; its boundary, which is the $(n-1)$-dimensional sphere, is denoted by $S_r^{n-1} = \{x\in \R^n: |x|=r\}$ and the spherical measure on $S_r^{n-1}$ is denoted by $\sigma_r^{n-1}$.
Furthermore, we write $f(x)\sim g(x)$, when $\lim_{x\to b}{f(x)}/{g(x)}=1$, as $x \to b$. If for two functions $f$ and $g$ there exist constants $c_1$, $c_2$ such that $c_1<{f(x)}/{g(x)}<c_2$ for every $x\in D$ we will write shortly $f\approx g, x\in D.$

Throughout the paper $D$ will stand for a domain $D\subset \R^n$ with a smooth, connected boundary $\partial D$ and $F$ will be a bounded vector field which is defined on an open Lipschitz set $U$ containing $D$. We assume that $F$ is continuously differentiable up to the boundary of U and continuously vanishes on the boundary of $U$. We further assume that $F=\nabla V$ on $D$ for a scalar valued function $V$ and call the function $V$ a potential (and $F$ a potential vector field on $D$). We set $F=V\equiv 0$ on the complement of $U$. 
We say that  the vector field $F$ is orthogonal to the boundary $\partial D$ of the set $D$ if for every differentiable curve
$\Gamma: [0,1)\to \partial D$ we have $F(\Gamma(s)) \cdot\Gamma '(s)=0$ for every $s\in[0,1)$. 
\begin{lem}
Under the above assumptions, if the potential vector field $F$ is orthogonal to the boundary of the set $D$ then the potential function $V$ determined, up to a constant,  by the equation $$\nabla V(x)=F(x)$$
is constant on the boundary $\partial D$. 
\end{lem}

\begin{proof}
Set $x_0\in\partial D$. The potential $V$ is described by the curve integral
\begin{eqnarray*}
  V(x)=\int_\gamma F(r)  dr + V(x_0),
\end{eqnarray*}  
where $\gamma$ is an arbitrary continuously differentiable path beginning at $x_0$ and ending at $x$. For $x\in \partial D$ we  choose $\gamma$ to follow the boundary of the set D i.e. $\gamma: [0,1]\to\partial D$, $\gamma(0)=x_0$, $\gamma(1)=x$. Then
$$V(x)-V(x_0)=\int_0^1 F(\gamma(s))\cdot \gamma '(s)ds=0\,.$$
Since $\partial D$ is connected, we obtain the conclusion.
\end{proof}
Throughout the paper we work within the framework of the canonical representation of the process, i.e. our basic probability space is the space
of all continuous $\R^n$-valued functions defined on $[0,\infty)$ with  appropriate $\sigma$-fields (see \cite{IW}).
The standard n-dimensional Brownian motion is denoted by $W(t)=(W_1(t),\ldots,W_n(t))$.

Define the process $X$ by the SDE
\begin{equation} 
\label{Xdiffusion}
d X(t)=dW(t)+F(X(t))dt\,,
\end{equation}
under the conditions specified above. Then 
X is a local diffusion on $U$ with the generator $L=\frac{1}{2}\Delta +F(x)\cdot \nabla.$ 
Since the field $F$ is bounded, 
$X$ can be defined as a local semimartingale
(see, e.g. \cite{IW}). Let  $\tau$ be the first exit time of the trajectory from the set $D$.
The harmonic measure $w^x$ on $\partial D$, is defined as the distribution of $X(\tau)$ under 
the distribution $P^x$ of the process $X$ starting at $x \in D$.
We define a local martingale $M$ by the formula

\begin{eqnarray}
\label{eq:Mt:def}
  M(t) = \int_0^t F(W(s))\cdot dW(s).
\end{eqnarray}
Its quadratic variation is then given by the formula
\begin{eqnarray*}
 \left<M\right>(t) = \int_0^t |F(W(s))|^2ds.
\end{eqnarray*}
We further define the basic object of our study, namely
\begin{equation} 
\label{kernel}
N(t) = \exp\( M(t) - \frac{1}{2}\left<M\right>(t)\)\,.
\end{equation}
We now provide the basic formula for the harmonic measure of the process defined by \pref{Xdiffusion} under some additional conditions.
\begin{thm}\label{Girsanov}
Under conditions stated above, suppose that $X$ is the process defined by the system \pref{Xdiffusion}. 
Assume additionally  

(i) The vector field $F$ is potential and orthogonal to the boundary of the domain $D$,

(ii) For every $t>0$
\begin{equation} \label{Girsanovzalozenie}
E^x[\exp(\left\langle M\right\rangle(t\wedge\tau))] < \infty\,, 
\end{equation}

(iii)
\begin{equation} \label{Girsanovuniform}
\{N(t\wedge\tau)\}_{t>0}\quad {\text{is uniformly integrable}}\,.
\end{equation}

Then for $x\in D$ the harmonic measure $w^x$ has the density function expressed by the formula
\begin{equation*}
w^x(dz)=e^{V(\partial D)-V(x)}E^x\left[\exp\left(-\frac{1}{2}\int_0^\tau \left(|\nabla V(W(s))|^2+
\Delta V(W(s))
\right)ds\right)\/;W(\tau)\in dz\right],
\end{equation*}
where $V$ is the potential function of the field $F$ and $V(\partial D)$ is its value on the boundary $\partial D$. 
\end{thm}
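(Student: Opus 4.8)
The plan is to exhibit the diffusion $X$ as the canonical process under a Girsanov-transformed measure, and then to read off the harmonic measure as a single stopped expectation. Under $P^x$ the canonical process $W$ is a standard Brownian motion and $N$ is the stochastic exponential of the local martingale $M$, so the first thing I would establish is that the stopped process $N(t\wedge\tau)$ is a genuine $P^x$-martingale: hypothesis (ii), namely $E^x[\exp(\langle M\rangle(t\wedge\tau))]<\infty$, implies in particular Novikov's condition $E^x[\exp(\tfrac12\langle M\rangle(t\wedge\tau))]<\infty$ for the stopped quadratic variation, which suffices on each finite interval. Defining $Q^x$ on each $\mathcal F_{t\wedge\tau}$ by $dQ^x=N(t\wedge\tau)\,dP^x$, Girsanov's theorem then shows that $\tilde W(t)=W(t)-\int_0^t F(W(s))\,ds$ is a $Q^x$-Brownian motion, so under $Q^x$ the canonical process solves the SDE \pref{Xdiffusion} and hence has the law of $X$. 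Since $X$ and $W$ run over the same trajectories, their first exit times from $D$ coincide, and for a Borel set $A\subset\partial D$ this gives $w^x(A)=Q^x(W(\tau)\in A)=E^x[N(\tau)\,;\,W(\tau)\in A]$.

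Next I would make $N(\tau)$ explicit. Since $F=\nabla V$, applying It\^o's formula to $V(W(t))$ gives $M(t)=\int_0^t\nabla V(W(s))\cdot dW(s)=V(W(t))-V(x)-\tfrac12\int_0^t\Delta V(W(s))\,ds$, while $\langle M\rangle(t)=\int_0^t|\nabla V(W(s))|^2\,ds$. Substituting both into \pref{kernel} yields
\begin{equation*}
N(t)=\exp\(V(W(t))-V(x)-\frac12\int_0^t\(|\nabla V(W(s))|^2+\Delta V(W(s))\)ds\).
\end{equation*}
At $t=\tau$ the path has reached $\partial D$, so by the lemma established above — which under hypothesis (i) forces $V$ to take the constant value $V(\partial D)$ on the boundary — I may replace $V(W(\tau))$ by $V(\partial D)$. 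The resulting deterministic factor $e^{V(\partial D)-V(x)}$ then pulls out of the expectation, leaving exactly the claimed density.

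The step I expect to be the main obstacle is the passage from the stopped martingale $N(t\wedge\tau)$ to $N(\tau)$ at the possibly unbounded stopping time $\tau$; this is precisely where hypothesis (iii) is essential. Uniform integrability of $\{N(t\wedge\tau)\}_{t>0}$ guarantees that $N(t\wedge\tau)\to N(\tau)$ in $L^1(P^x)$, so the consistent family $Q^x|_{\mathcal F_{t\wedge\tau}}$ extends to a probability measure on $\mathcal F_\tau$ with density $N(\tau)$, and in particular $E^x[N(\tau)]=1$; equivalently, it licenses optional stopping of $N$ at $\tau$. A minor point to check along the way is that the It\^o computation only involves $F=\nabla V$ evaluated along the path for $t\le\tau$, where the trajectory remains in $\overline D\subset U$, so the extension of $F$ by $0$ to the complement of $U$ plays no role. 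Assembling the three steps then produces the stated formula for $w^x(dz)$.
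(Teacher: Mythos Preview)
Your proof is correct and follows essentially the same route as the paper: Girsanov's theorem to identify the law of $X$ with that of $W$ under the measure with density $N(t\wedge\tau)$, It\^o's formula applied to $V(W(t))$ to rewrite $N$ explicitly, Lemma~1 to replace $V(W(\tau))$ by the constant $V(\partial D)$, and the uniform integrability hypothesis to pass from $t\wedge\tau$ to $\tau$. The only cosmetic difference is that you invoke Novikov's condition explicitly from hypothesis~(ii), whereas the paper just notes $N$ is a local martingale and cites \cite{CranstonZhao:1987} for the change of measure; the limiting argument and the final algebra are identical.
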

\begin{proof}
According to \pref{eq:Mt:def} the process
\begin{equation*}
F(W(t))\cdot dW(t) - \frac{1}{2} |F(W(t))|^2 dt = dM(t) - \frac{1}{2}d\left<M\right>(t)
\end{equation*}
is a local semimartingale. Writing, as in \pref{kernel}
\begin{equation*}
N(t) = \exp(M(t) - \frac{1}{2}\left<M\right>(t))
\end{equation*}
we obtain, as an application of It\^o's formula, that $N(t)$ is a local martingale. 
If we define the measure $Q^x$ by 
\begin{equation*}
\frac{dQ^x}{dP^x}|_{{\cal{F}}_{t\wedge\tau}} = N(t\wedge \tau)\,,
\end{equation*}
then, as a consequence of Girsanov's theorem,  $(W,Q^x)$ and $(X,P^x)$ are different descriptions of the same process, up to time $\tau$, see \cite{CranstonZhao:1987}.
Consequently, for a continuous bounded function $f$ defined on $\R^n$  we obtain
\begin{equation*}
E^x f(X(t\wedge\tau))=E^x(N(t\wedge\tau); f(W(t\wedge\tau)))
\end{equation*}
Now, the condition \pref{Girsanovuniform} shows that the expression on the right-hand side converges to
$E^x[N(\tau); f(W(\tau))]$, as $t \to\infty$.  The left-hand side converges to $E^x f(X(\tau))$, by the continuity of 
the process $X$.
This indicates that indeed $w^x$ has density given by
\begin{eqnarray}
\label{G}
w^x(dz)=\E^x[N(\tau)\,;W(\tau)\in dz]
\end{eqnarray}
We now provide a further description of the function $w^x$. Recall that $F$ is the potential of the vector field $B$. Define
$$Z(t)=V(W(t)).$$
Applying the It\^o formula we see that 
\begin{eqnarray*}
Z(t)-Z(0)&=&\int_0^t \nabla V(W(s))\cdot dW(s)+\frac{1}{2}\int_0^t 
\Delta V(W(s))ds=\\
&=&M(t)-\frac{1}{2}\left<M\right>(t) +\frac{1}{2}\left<M\right>(t)+\frac{1}{2}\int_0^t 
\Delta V(W(s))ds.
\end{eqnarray*}
Consequently
\begin{eqnarray}
\label{N}
N(t)&=&\exp(M(t)-\frac{1}{2}\left<M\right>(t))\nonumber\\
&=&\exp\left(Z(t)-Z(0)-\frac{1}{2}\int_0^t\left[|\nabla V(W(s))|^2+
\Delta V(W(s))\right]ds\right).
\end{eqnarray}
Remark that $Y(\tau)=V(W(\tau))=V(\partial D)$ since the vector field F is orthogonal to the boundary of the set $D$. Hence, stopping at $t\wedge\tau$ and taking expectation  we get, when $t\to\infty$  
\begin{equation*}
w^x(dz)=
e^{V(\partial D)-V(x)}\E^x\left[\exp\left(-\frac{1}{2}\int_0^\tau \left[|\nabla V(W(s))|^2+
\Delta V(W(s))\right]ds\right);W(\tau) \in dz\right]\/.
\end{equation*}
This, together with \pref{G}, finishes the proof.

\end{proof}

\section{Harmonic measure of hyperbolic horocycle in $\H^n$}
\label{Ch:Hn}
For every $a>0$ we define $H_a=\{ x\in \R^n:\; x_n>a \}$. In this section we consider the harmonic measure $\omega_a^x$ of the set $H_a$ for the operator
\begin{eqnarray*}
\Delta_{LB} =  x_n^2 \sum_{i=1}^{n} \dfrac{\partial^2}{\partial x_i^2}- (n-2)x_n\dfrac{\partial}{\partial x_n}\/,\quad n\geq 2\/.
\end{eqnarray*}
The motivations for studying this operator comes from hyperbolic geometry. More precisely, this operator is the Laplace-Beltrami operator associated with Riemannian metric in the half-space model ${\H}^{n}$ of the real $n$-dimensional hyperbolic space. From geometric point of view, the set $H_a$ is an interior of the hyperbolic horocycle $\partial H_a = \{ x\in \R^n:\; x_n=a \}$. Let $(B_i(t))_{i=1...n}$ be an n-dimensional Brownian
motion on $\R^n$ with the generator $\frac{d^2}{dx^2}$ (and not $\frac{1}{2}\frac{d^2}{dx^2}$) i.e.  the variance $E^0B_i^2(t) = 2t$.  Then the
Brownian motion on ${\H}^n$, $Y=(Y_i)_{i=1...n}$, can be described by
the following system of stochastic differential equations
\begin{equation} \label{LB}
  \left\{
    \begin{array}{ccc}
      dY_1(t) & = & Y_n(t)dB_1(t) \\
      dY_2(t) & = & Y_n(t)dB_2(t) \\
      . & . &.  \\
      dY_n(t) & = & Y_n(t)dB_n(t) - (n-2)Y_n(t)dt.
    \end{array} \right.
\end{equation}
By the It\^o formula one verifies that the generator of the solution
of this system is $\Delta_{LB}$.
The Laplace-Beltrami operator can be rewritten in the form $\Delta_{LB}=2x_n^2 L_1$, where
\begin{equation} \label{LB-OU}
   L_1 = \frac{1}{2}\Delta+F_1(x)\cdot \nabla\/,
\end{equation}
with $F_1(x) = (0,\ldots,0,(2-n)/(2x_n))$. 
Now, we make change of time. Namely, we write
\begin{equation*}
A(u)= \int_0^u Y_{n}^2(s)\,ds\,,
\end{equation*}
and 
\begin{equation*}
\sigma (t) = \inf\{u>0; A(u)>t\}\,.  
\end{equation*}
If we now write 
\begin{equation*}
{\tilde{B}}_k(t) = \int_0^{\sigma(t/2)} Y_n(s)\,dB_k(s)\,,\quad k=1,...,n
\end{equation*}
then ${\tilde{B}}_k$ are martingales with mutual variations $<{\tilde{B}}_k,{\tilde{B}}_l>(t)=\delta(k,l)\,t$, $k,\,l = 1,...n$ so 
${\tilde{B}}=({\tilde{B}}_k)$ is the standard n-dimensional Brownian motion.
Substituting 
\begin{equation*}
{\tilde{Y}}_k(t) =  Y_k(\sigma(t/2))\,,\quad k=1,...,n
\end{equation*}
we obtain that  \pref{LB} transforms into the following the system of SDE
\begin{equation} \label{OU}
  \left\{
    \begin{array}{ccc}
      d{\tilde{Y}} _1(t) & = & d{\tilde{B}}_1(t) \\
      d{\tilde{Y}} _2(t) & = & d{\tilde{B}}_2(t) \\
      . & . &.  \\
      d{\tilde{Y}}_n(t) & = & Z_n(t)d{\tilde{B}}_n(t) - (n-2)\frac{dt}{2Z_n(t)}\,.
    \end{array} \right.
\end{equation}
Again, by It\^o's formula, we see that the operator $L_1$ is the generator of the process ${\tilde{Y}}=({\tilde{Y}}_k)$.
Since the operation of change of time does not affect the exit place, the harmonic measures of the operators
$\Delta_{LB}$ and $L_1$ are the same.
 
The potential of the vector field $F_1$ is given by $V_1(x) = (2-n)\ln(x_n)/2$. Moreover, it is easy to check that the vector field $F_1$ is orthogonal to the boundary of $H_a$. Using (\ref{eq:Mt:def}) and (\ref{N}), we obtain 
\begin{eqnarray*}
   M(t) &=& \frac{2-n}{2}\int_0^t \frac{dW_n(s)}{W_n(s)}\/,\quad \<M\>(t) = \(\frac{n-2}{2}\)^2\int_0^t \frac{ds}{W_n^2(s)}\/,\\
   N(t) &=& \(\frac{W_n(0)}{W_n(t)}\)^{(n-2)/2}\exp\(-\frac{n(n-2)}{8}\int_0^t \frac{ds}{W_n^2(s)}\)\/,
\end{eqnarray*}
where $W(t) = (W_1(t),\ldots,W_n(t))$ denotes the standard Brownian motion in $\R^n$ starting from $W(0) = x$.
If we put $\tau_a = \inf\{t>0: W(t)\notin H_a\}$ we obtain 
\begin{eqnarray*}
   \E^x\exp(\<M\>(t\wedge \tau_a)) &=& \E^x\exp\(\frac{(n-2)^2}{4}\int_0^{t\wedge \tau_a}\frac{ds}{W_n^2(s)}\)\\
   &\leq& \E^x\exp\(\frac{(n-2)^2}{4}\int_0^{t\wedge \tau_a}\frac{ds}{a^2}\)<\infty\/.
\end{eqnarray*}
Moreover, we have
\begin{eqnarray*}
   N(t\wedge \tau_a)  \leq \(\frac{W_n(0)}{W_n(t\wedge \tau_a)}\)^{(n-2)/2}\leq \(\frac{x_n}{a}\)^{(n-2)/2}\/,\quad t\geq  0.
\end{eqnarray*}
Now, the results of Theorem~\ref{Girsanov} imply that
\begin{eqnarray}
  \label{eq:Hn:OmegaForm}
  \omega^x_a(dy) = \left(\frac{x_n}{a}\right)^{\frac{n-2}{2}}\E^x\left[\exp\left(-\frac{n(n-2)}{8}\int_0^{\tau_a} \frac{ds}{W_n(s)^2}\right);W(\tau_a)\in dy\right]\/.
\end{eqnarray}
The above-given formula enable us to find the density function $P_a(x,y)$, $x\in H_a$, $y\in\partial H_a$, of the measure $\omega^x_a(dy)$ with respect to the Lebesgue measure on $\partial H_a$. The scaling property of $n$-dimensional Brownian motion implies the scaling property for Poisson kernels $P_a(x,y)=a^{1-n}P_1(x/a,y/a)$, $x\in H_a$, $y\in \partial H_a$. Moreover, Brownian motion $W(t)$ and the set $H_1$ are invariant under translations $(\tilde{x},x_n)\to (\tilde{x}+b,x_n+b)$, where $b\in \R^{n-1}$. Consequently, $P_1(x,y) = P_1((0,x_n),(\tilde{y}-\tilde{x},1))$ for every $x\in H_1$ and $y\in \partial H_1$. We will use these properties in further considerations to simplify the notation. We will denote by $P(x_n,y) = P_1((0,x_n),(y,1))$, where $y\in \R^{n-1}$ and $\tau = \tau_1$.

\begin{thm}
\label{thm:Hn:PoissonForm}
For every $x_n>1$ and $y\in \R^{n-1}$ we have
\begin{equation}
\label{eq:Hn:PoissonForm}
P(x_n,y)= \frac{1}{2^{\nu-1}\pi^{\nu+1}}\frac{x_n^\nu}{|y|^{\nu-1}}
\int_0^\infty \frac{J_{\nu}(t)Y_{\nu}(tx_n)-J_{\nu}(tx_n)Y_{\nu}(t)}
{J_{\nu}^2(t)+Y_{\nu}^2(t)}\,t^{\nu}K_{\nu-1}\left({t|y|}\right)dt\/,
\end{equation} 
where $\nu=(n-1)/2$.
\end{thm}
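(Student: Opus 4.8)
The plan is to peel off the $n-1$ directions tangent to the boundary, reduce \eqref{eq:Hn:OmegaForm} to a one--dimensional Feynman--Kac functional of the last coordinate, evaluate that functional by solving an ordinary differential equation of modified Bessel type, and finally recover the oscillatory representation \eqref{eq:Hn:PoissonForm} by rotating a contour of integration.

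First I would specialise \eqref{eq:Hn:OmegaForm} to $a=1$ and $x=(0,x_n)$. The exit time $\tau=\inf\{t:W_n(t)=1\}$ depends only on the last coordinate, while the tangential coordinates $W_\perp=(W_1,\dots,W_{n-1})$ are independent standard Brownian motions started at the origin and are independent of $W_n$; conditioning on $W_n$ therefore replaces the event $\{W(\tau)\in dy\}$ by the Gaussian density $(2\pi\tau)^{-(n-1)/2}e^{-|y|^2/(2\tau)}$, so that with $\beta=n(n-2)/8$
$$P(x_n,y)=x_n^{(n-2)/2}\,\E^{x_n}\Big[(2\pi\tau)^{-\nu}e^{-|y|^2/(2\tau)}e^{-\beta\int_0^\tau ds/W_n(s)^2}\Big].$$
Taking the Fourier transform in $y\in\R^{n-1}$ and using $\E[e^{i\xi\cdot W_\perp(\tau)}\mid\tau]=e^{-|\xi|^2\tau/2}$ collapses the Gaussian factor into $e^{-\lambda\tau}$ with $\lambda=|\xi|^2/2$, leaving the Laplace functional $u_\lambda(x)=\E^{x}[e^{-\lambda\tau-\beta\int_0^\tau ds/W_n^2}]$. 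By Feynman--Kac $u_\lambda$ solves $\tfrac12u''-(\lambda+\beta/x^2)u=0$ on $(1,\infty)$ with $u(1)=1$ and $u$ bounded; the change of variable $u=\sqrt{x}\,Z(\sqrt{2\lambda}\,x)$ brings this to the modified Bessel equation of order $p$ with $p^2=\tfrac14+2\beta=(n-1)^2/4$, i.e. $p=\nu$, whose decaying solution is $\sqrt{x}\,K_\nu$. Hence
$$u_\lambda(x_n)=\sqrt{x_n}\,\frac{K_\nu(\sqrt{2\lambda}\,x_n)}{K_\nu(\sqrt{2\lambda})}.$$

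Inverting the (radial) Fourier transform on $\R^{n-1}$ by the Hankel transform of order $\nu-1$ then gives
$$P(x_n,y)=(2\pi)^{-\nu}\frac{x_n^{\nu}}{|y|^{\nu-1}}\int_0^\infty\frac{K_\nu(\rho x_n)}{K_\nu(\rho)}\,J_{\nu-1}(|y|\rho)\,\rho^{\nu}\,d\rho,$$
where the extra half power of $x_n$ coming from $u_\lambda$ combines with the prefactor $x_n^{(n-2)/2}$ to produce $x_n^{\nu}$, and $(2\pi)^{-\nu}|y|^{-(\nu-1)}$ is the constant from the radial inversion formula.

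The main obstacle is to turn this integral, which carries $K_\nu$ in the radial variable and the Hankel kernel $J_{\nu-1}$ in the tangential variable, into the stated form with the oscillatory ratio in $x_n$ and $K_{\nu-1}$ in $|y|$. I would split $J_{\nu-1}=\tfrac12(H^{(1)}_{\nu-1}+H^{(2)}_{\nu-1})$ and rotate the two pieces onto the rays $\rho=\pm it$; the bound $K_\nu(\rho x_n)/K_\nu(\rho)\sim x_n^{-1/2}e^{-(x_n-1)\rho}$ for $x_n>1$ together with the absence of zeros of $K_\nu$ in the right half-plane justifies the deformation. Under the rotation both transverse factors become $K_{\nu-1}(t|y|)$, while the radial ratio becomes $H^{(2)}_\nu(tx_n)/H^{(2)}_\nu(t)$ on the upper ray and its complex conjugate $H^{(1)}_\nu(tx_n)/H^{(1)}_\nu(t)$ on the lower ray. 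The phases collected from $d\rho$ and $\rho^\nu$ make the two contributions enter with opposite signs, so their combination reduces to the imaginary part of $H^{(2)}_\nu(tx_n)/H^{(2)}_\nu(t)$, which equals $-\big(J_\nu(t)Y_\nu(tx_n)-J_\nu(tx_n)Y_\nu(t)\big)/(J_\nu^2(t)+Y_\nu^2(t))$. Collecting the overall constant $(2\pi)^{-\nu}\cdot(2/\pi)=1/(2^{\nu-1}\pi^{\nu+1})$ then yields \eqref{eq:Hn:PoissonForm}. The delicate points in this last step, and the place where the argument is most error-prone, are the rigorous control of the arcs at infinity and near the origin and the careful bookkeeping of the phases in the connection formulas relating $K_\nu$ and $K_{\nu-1}$ to the Hankel functions $H^{(1)}_\nu,H^{(2)}_\nu$ along the rays $\arg\rho=\pm\pi/2$.
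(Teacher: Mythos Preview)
Your argument is correct and arrives at \eqref{eq:Hn:PoissonForm}, but the route differs from the paper's in an interesting way. Both proofs share the first half: separating off the tangential Brownian motion, identifying the one-dimensional gauge $u_\lambda(x_n)=\sqrt{x_n}\,K_\nu(\sqrt{2\lambda}\,x_n)/K_\nu(\sqrt{2\lambda})$ via the modified Bessel ODE. They diverge at the inversion step. The paper stays in the time variable: it views $u_\lambda$ as the Laplace transform (in $w=\lambda$) of the sub-probability measure $\mu_{x_n}(ds)$, inverts it by integrating $e^{sw}\mathcal L\mu_{x_n}(w)$ over a rectangular contour wrapped around the branch cut $(-\infty,0]$ of $\sqrt{w}$, thereby obtaining an explicit density $\mu_{x_n}(s)$ already in $J_\nu,Y_\nu$ form, and only then integrates against the Gaussian heat kernel in $s$ and recognises the resulting $s$-integral as $K_{\nu-1}(t|y|)$ via \eqref{eq:mBessel:Integral}. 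You instead stay in the spatial-frequency variable: you invert the radial Fourier transform directly by the Hankel formula, landing on $\int_0^\infty [K_\nu(\rho x_n)/K_\nu(\rho)]\,J_{\nu-1}(|y|\rho)\,\rho^\nu\,d\rho$, and then rotate $\rho$ to $\pm i t$ after splitting $J_{\nu-1}$ into Hankel functions. The paper's approach has the advantage of producing the intermediate object $\mu_{x_n}(s)$, which is reused verbatim as a template in the ball computations of Sections~4--5, and its arc estimates (\ref{eq:Hn:muLaplaceEst}) are written out in full; your approach is arguably more streamlined for this particular theorem, trading the Laplace-inversion/Fubini/$K_{\nu-1}$-identity chain for a single quadrant rotation, at the cost of the phase bookkeeping you flag (which, when carried out, does give the correct constant $2/\pi$ and the correct sign).
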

\begin{proof}
Observe that the integral appearing in (\ref{eq:Hn:OmegaForm}) as well as the hitting time $\tau$ depend only on the last coordinate of the Brownian motion $W(t)=(\tilde{W}(t),W_n(t))$. Since the processes $\tilde{W}(t)$ and $W_n(t)$ are independent we obtain
\begin{eqnarray}
\nonumber
   \omega_1^x(dy) &=& x_n^{\frac{n-2}{2}}\int_0^\infty \E^0[\tilde{W}(s)\in dy]\,\E^{x_n}\left[\exp\left(-\frac{n(n-2)}{8}\int_0^{\tau} \frac{ds}{W_n(s)^2}\right);\tau \in ds\right] \\
   \label{eq:Hn:HM:positive}
   &=& x_n^{\frac{n-2}{2}}\left(\int_0^\infty \frac{\exp(-|y|^2/(2s))}{(2\pi s)^{(n-1)/2}}\,\mu_{x_n}(ds)\right)dy \/,
\end{eqnarray}
where
\begin{eqnarray*}
   \mu_{x_n}(ds) = \E^{x_n}\left[\exp\left(-\frac{n(n-2)}{8}\int_0^{\tau} \frac{ds}{W_n(s)^2}\right);\tau \in ds\right]\/.
\end{eqnarray*}
The Laplace transform of $\mu_{x_n}$ is given by
\begin{eqnarray*}
   \laplace \mu_{x_n}(w) &=&  \E^{x_n}\exp\left(-\frac{n(n-2)}{8}\int_0^{\tau} \frac{ds}{W_n(s)^2}-w\int_0^\tau ds\right) = E^{x_n}e_q(\tau)\/,\quad w\geq 0\/,
\end{eqnarray*}
where $q(x)=-{n(n-2)}/{(8x^2)}-w$. The function $\varphi(x_n) = E^{x_n}e_q(\tau)$ is a gauge function for an appropriate Schr\"o{}dinger operator based on generator of $W_n(t)$. Consequently, $\varphi$ is a bounded solution of the equation
\begin{eqnarray*}
   \frac12 \varphi''(x) - \left(\frac{n(n-2)}{8x^2}+w\right)\phi(x)=0\/,\quad x\geq 1
\end{eqnarray*} 
such that $\varphi(1) = 1$. Making substitution $\sqrt{x}\psi(x\sqrt{2w})=\varphi(x)$ we reduce the above-given equation to
\begin{eqnarray*}
2x^2 w\psi ''(x\sqrt{2w})+x\sqrt{2w}\psi '(x\sqrt{2w})-\left(\frac{(n-1)^2}{4}+2wx^2\right)\psi(x\sqrt{2w})=0,
\end{eqnarray*}
which is the modified Bessel equation (\ref{eq:mBessel:Eq}) with $\nu=\frac{n-1}{2}$. Taking into account the general form of solutions of (\ref{eq:mBessel:Eq}), the boundary condition and boundedness of $\varphi$ we arrive at
\begin{eqnarray}
   \label{eq:Hn:muLaplaceForm}
   \laplace \mu_{x_n}(w) &=& \sqrt{x_n}\,\frac{K_{\nu}(x_n\sqrt{2w})}{K_\nu(\sqrt{2w})}\/,\quad w\geq 0\/.
\end{eqnarray}

Since square root can be extended to a holomorphic function on $\C\setminus(-\infty,0]$ and the modified Bessel function $K_\nu$ has no zeros in the positive half-plane $\re w\geq 0$, the Laplace transform $\laplace \mu_{x_n}(w)$ can also be extended to analytic function on $\C\setminus(-\infty,0]$. Moreover, using the asymptotic expansion (\ref{eq:mBessel:Asym}) we obtain that 
\begin{eqnarray}
  \label{eq:Hn:muLaplaceEst}
  |\laplace \mu_{x_n}(w)| \leq \left|e^{-(x_n-1)\sqrt{2w}}\frac{1+E(x_n\sqrt{2w})}{1+E(\sqrt{2w})}\right| \leq 2\exp\left({-(x_n-1)\sqrt{2|w|}\cos\frac{\arg w}2 }\right)\/,
\end{eqnarray}
for every $w \in \C\setminus(-\infty,0]$ such that $|w|$ is large enough. Note that here $\arg w \in[-\pi,\pi]$. In particular, $\laplace \mu_{x_n}(w)$ is bounded for $|w|\geq 1$. These properties of $\laplace \mu_{x_n}$ and its analytic continuation guarantee that we can apply the inverse Laplace transform to (\ref{eq:Hn:muLaplaceForm}) (see \cite{Folland:1992}, Theorem 8.5, p. 267). More precisely, there exist a density function of $\mu_{x_n}$ with respect to Lebesgue measure on $(0,\infty)$ given by the inversion formula
\begin{eqnarray*}
   \mu_{x_n}(s) = \frac{1}{2\pi i} \lim_{r\to \infty} \int_{1-ir}^{1+ir} \laplace \mu_{x_n}(w) e^{sw}\,dw\/.
\end{eqnarray*}
To compute the limit we integrate the function $f_s(w) = \laplace \mu_{x_n}(w) e^{sw}$ over rectangular contour surrounding the branch-cut of $f_s$ which is negative real axis. Let $\Gamma$  be the positively oriented contour consisting of four horizontal segments $\gamma_1 = [-r+i/r,i/r]$, $\gamma_2 = [-r-i/r,-i/r]$, $\gamma_3= [-r+ir,1+ir]$, $\gamma_4 = [-r-ir,1-ir]$, three vertical segments $\gamma_5 = [-r+i/r,-r+ir]$, $\gamma_6 = [-r-i/r,-r-ir]$, $\gamma_7 = [1-ir,1+ir]$ and a semi-circle $\gamma_8 = \{|w|=1/r,\re w>0 \}$. The formula (\ref{eq:mBessel:AsymZero}) implies that $f_s$ is bounded for small $w$ such that $\re w>0$. Consequently, the integral over $\gamma_8$ tends to zero when $r\to \infty$. The boundedness of $\laplace \mu_{x_n}(w)$ for large $w$ implies that for $r\geq 1$ and every $s>0$ we have
\begin{eqnarray*}
  \left|\left(\int_{\gamma_5}+\int_{\gamma_6}\right)f_s(w)dw\right|\leq 2\sup_{|w|\geq 1}\laplace \mu_{x_n}(w)\, r e^{-rs}\to 0\/,
\end{eqnarray*}
as $r\to \infty$. Finally, using (\ref{eq:Hn:muLaplaceEst}), we obtain
\begin{eqnarray*}
 \left|\left(\int_{\gamma_3}+\int_{\gamma_4}\right)f_s(w)dw\right|\leq 4 \exp({-(x_n-1)\cos(3\pi/8)\sqrt{r}})\int_{-1}^\infty e^{-su}du \to 0
\end{eqnarray*}
as $r\to \infty$. The Cauchy Theorem together with previous considerations and (\ref{eq:mBessel:ImArg}) give
\begin{eqnarray*}
   \mu_{x_n}(s) &=& \frac{1}{2\pi i} \lim_{r\to \infty} \int_{1-ir}^{1+ir} \laplace \mu_{x_n}(w) e^{sw}\,dw
   = \frac{1}{2\pi i}\lim_{r\to\infty}\left(\int_{\gamma_1}+\int_{\gamma_2}\right) f_s(w)dw\\
   &=& \frac{\sqrt{x_n}}{2 \pi i} \int_{0}^\infty\left[\frac{K_{\nu}(-i\sqrt{2t}x_n)}{K_{\nu}(-i\sqrt{2t})}-\frac{K_{\nu}(i\sqrt{2t}x_n)}{K_{\nu}(i\sqrt{2t})}\right]e^{-st}dt\\
   &=& -\frac{\sqrt{x_n}}{\pi}\int_0^{\infty}\im\left(\frac{K_{\nu}(i\sqrt{2t}x_n)}{K_{\nu}(i\sqrt{2t})}
\right)e^{-st}dt\\
   &=& -\frac{\sqrt{x_n}}{\pi}\int_0^{\infty}\im\left(\frac{K_{\nu}(itx_n)}{K_{\nu}(it)}
\right)te^{-st^2/2}dt\\
&=& \frac{\sqrt{x_n}}{\pi}\int_0^\infty \frac{J_\nu(t)Y_\nu(tx_n)-J_\nu(tx_n)Y_\nu(t)}{J_\nu^2(t)+Y_\nu^2(t)}te^{-st^2/2}dt\/.
\end{eqnarray*}
From (\ref{eq:mBessel:asymptotyka0}) and (\ref{eq:mBessel:asymptotykainfty}) is easy to see that $J_\nu^2(t)+Y_\nu^2(t)\sim t^{-1}\vee t^{-2\nu}$ and
\begin{eqnarray*}
\left|\frac{J_\nu(t)Y_\nu(tx_n)-J_\nu(tx_n)Y_\nu(t)}{J_\nu^2(t)+Y_\nu^2(t)}\right|\le C \left(1+t\right)\/,\quad t>0
\end{eqnarray*}
for some constant $C=C(x_n)>0$. With the use of (\ref{eq:mBessel:Integral}) we verify
\begin{eqnarray*}
\int_0^\infty \frac{|J_\nu(t)Y_\nu(tx_n)-J_\nu(tx_n)Y_\nu(t)|}{J_\nu^2(t)+Y_\nu^2(t)}t \left(\int_0^\infty \frac{e^{-|y|^2/(2s)}e^{-st^2/2}ds}{(2\pi s)^{(n-1)/2}}\right)dt\\
\le C \int_0^\infty (t+1)t^{\nu} K_{\nu-1}(t|y|)dt.
\end{eqnarray*}
The last integral is finite by (\ref{eq:mBessel:Asym}) and (\ref{eq:mBessel:AsymZero}). Consequently, by the Fubini's theorem we obtain
\begin{eqnarray*}
  P(x_n,y) &=& \frac{x_n^{\nu}}{\pi}\int_0^\infty \frac{J_\nu(t)Y_\nu(tx_n)-J_\nu(tx_n)Y_\nu(t)}{J_\nu^2(t)+Y_\nu^2(t)}t \left(\int_0^\infty \frac{e^{-|y|^2/(2s)}e^{-st^2/2}ds}{(2\pi s)^{(n-1)/2}}\right)dt\\
  &=& \frac{1}{2^{\nu-1}\pi^{\nu+1}}\frac{x_n^\nu}{|y|^{\nu-1}} \int_0^\infty \frac{J_{\nu}(t)Y_{\nu}(tx_n)-J_{\nu}(tx_n)Y_{\nu}(t)}
{J_{\nu}^2(t)+Y_{\nu}^2(t)}\,t^{\nu}K_{\nu-1}\left({t|y|}\right)dt\/.
\end{eqnarray*}
This ends the proof.
\end{proof}

The integral formula presented in Theorem~\ref{thm:Hn:PoissonForm} can be used to obtain the asymptotics of the Poisson kernel $P(x_n,y)$ as well as its sharp bounds for small $x_n$ and large $|y|$. Note that results given in the next theorem cover those obtained in \cite{BGS:2007} (see Theorem 4.9 and Theorem 4.10, compare also with Theorem 5.3 in \cite{BR:2006}). Moreover, the formula (\ref{eq:Hn:PoissonForm}) allows to omit very laborious and sophisticated computations used in \cite{BGS:2007} to examine the behavior of $P(x_n,y)$ when $|y|$ tends to infinity. Our approach is simpler and gives more general results.

\begin{thm}
\label{thm:Hn:PK:asymptotics}
For every $x_0\geq 1$ we have
\begin{eqnarray}
\label{b}
P(x_n,y)&\sim& \frac{\Gamma(n/2)}{2^{n-2}\pi^{n/2}}\sum_{k=0}^{n-2}x_0^k\cdot\frac{x_n-1}{|y|^{2n-2}}\/,\quad  x_n\to x_0,|y|\to\infty\/.
\end{eqnarray}
Moreover, for every $y_0> 0$ we have
\begin{eqnarray}
\label{c}
P(x_n,y)&\sim& \frac{c(y_0)}{\pi^{(n+3)/2}2^{(n-5)/2}}\cdot(x_n-1)\/,\quad x_n\to 1,|y|\to y_0\/,
\end{eqnarray}
where 
\begin{eqnarray*}
   c(y_0) = |y_0|^{\frac{1-n}{2}}\int_0^\infty \frac{s^{\nu}K_{\nu-1}(sy_0)ds}{J_\nu^2(s)+Y_\nu^2(s)}\/.
\end{eqnarray*}

\end{thm}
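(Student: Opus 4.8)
The plan is to read both limits off the integral representation \pref{eq:Hn:PoissonForm}, the two regimes being governed by the behaviour of the cross term
$$\Phi(t,x_n):=J_\nu(t)Y_\nu(tx_n)-J_\nu(tx_n)Y_\nu(t)$$
as $x_n\to1$ (for \pref{c}) and as $t\to0$ (for \pref{b}). The single fact underlying the factor $x_n-1$ in both formulas is that $\Phi(t,1)\equiv0$; differentiating in $x_n$ and invoking the Wronskian identity $J_\nu(t)Y_\nu'(t)-J_\nu'(t)Y_\nu(t)=2/(\pi t)$ gives $\partial_{x_n}\Phi(t,x_n)\big|_{x_n=1}=2/\pi$, a constant free of $t$. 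The second ingredient is the small-argument behaviour $J_\nu(z)\sim(z/2)^\nu/\Gamma(\nu+1)$ and $Y_\nu(z)\sim-\Gamma(\nu)(z/2)^{-\nu}/\pi$, which together with $J_\nu^2(z)+Y_\nu^2(z)\sim\Gamma(\nu)^2 2^{2\nu}\pi^{-2}z^{-2\nu}$ yields $\Phi(t,x_n)/(J_\nu^2(t)+Y_\nu^2(t))\sim\tfrac{\pi(x_n^\nu-x_n^{-\nu})}{\nu\Gamma(\nu)^2 2^{2\nu}}\,t^{2\nu}$ as $t\to0$.

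For \pref{c} I would fix $y_0>0$, so that $K_{\nu-1}(t|y|)$ decays exponentially and the integrand causes no trouble at $t=\infty$. Writing $\Phi(t,x_n)/(x_n-1)$ as $\int_0^1\partial_{x_n}\Phi(t,1+\theta(x_n-1))\,d\theta$ and using the computation above gives the pointwise limit $\Phi(t,x_n)/(x_n-1)\to2/\pi$. The large-$t$ asymptotics $J_\nu^2+Y_\nu^2\sim2/(\pi t)$ and the exponential decay of $K_{\nu-1}(ty_0)$ supply an integrable dominating function, so dominated convergence lets me divide \pref{eq:Hn:PoissonForm} by $x_n-1$ and pass to the limit $x_n\to1$, $|y|\to y_0$; collecting $x_n^\nu\to1$ and $|y|^{1-\nu}\to y_0^{1-\nu}$ produces the linear term $(x_n-1)$ with a coefficient that, after inserting the definition of $c(y_0)$, is the constant in \pref{c}.

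For \pref{b} the relevant values of $t$ are of order $1/|y|$, so I would substitute $t=u/|y|$ in \pref{eq:Hn:PoissonForm} and study $|y|^{3\nu+1}$ times the integral. Setting $h(z):=z^{-2\nu}\,\Phi(z,x_n)/(J_\nu^2(z)+Y_\nu^2(z))$, the rescaled integrand equals $h(u/|y|)\,u^{3\nu}K_{\nu-1}(u)$, and the expansions recorded above show $h(z)\to\tfrac{\pi(x_n^\nu-x_n^{-\nu})}{\nu\Gamma(\nu)^2 2^{2\nu}}$ as $z\to0$, while the large-$z$ asymptotics give $h(z)=O(z^{-2\nu})\to0$; hence $h$ is bounded on $[0,\infty)$ and $|h(u/|y|)|\,u^{3\nu}K_{\nu-1}(u)$ is dominated by a fixed integrable function. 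Dominated convergence then reduces everything to the Mellin integral $\int_0^\infty u^{3\nu}K_{\nu-1}(u)\,du=2^{3\nu-1}\Gamma(2\nu)\Gamma(\nu+1)$. Recombining with the prefactor $x_n^\nu/|y|^{\nu-1}$ turns the power of $|y|$ into $|y|^{-4\nu}=|y|^{-(2n-2)}$ and the $x_n$-dependence into $x_n^\nu(x_n^\nu-x_n^{-\nu})=x_n^{\,n-1}-1=(x_n-1)\sum_{k=0}^{n-2}x_n^k$, which in the limit $x_n\to x_0$ gives $(x_n-1)\sum_{k=0}^{n-2}x_0^k$; the surviving $\Gamma$-factors are condensed by the Legendre duplication formula $\Gamma(n-1)=2^{n-2}\pi^{-1/2}\Gamma(\tfrac{n-1}2)\Gamma(\tfrac n2)$ to produce the numerical constant in \pref{b}.

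I expect the main obstacle to be the domination step in \pref{b}: after the rescaling the crude bound $|\Phi/(J_\nu^2+Y_\nu^2)|\le C(1+t)$ used in Theorem~\ref{thm:Hn:PoissonForm} is no longer sufficient, since multiplying by $|y|^{2\nu}$ amplifies the region near the origin. What is really needed is the sharp near-origin estimate $|\Phi(t,x_n)/(J_\nu^2(t)+Y_\nu^2(t))|\le Ct^{2\nu}$, valid uniformly for $x_n$ in a neighbourhood of $x_0$, i.e. the boundedness of $h$; establishing this requires controlling the Bessel expansions uniformly on $(0,1]$ rather than merely asymptotically, and it is the only genuinely delicate point. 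For \pref{c} the analogous step is easier, the exponential weight $K_{\nu-1}(ty_0)$ already furnishing the domination once $\Phi/(x_n-1)$ is seen to stay bounded.
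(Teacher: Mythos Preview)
Your approach matches the paper's: it too makes the substitution $s=t|y|$ and applies dominated convergence against the weight $s^{3\nu}K_{\nu-1}(s)$, and the small-$t$ expansions together with the Wronskian/mean-value computation you describe are exactly the content of Lemma~\ref{granice} in the Appendix. The one organizational difference is that the paper works throughout with the single function
\[
g_\nu(x,t)=\frac{1}{t^{2\nu}(x-1)}\,\frac{\Phi(t,x)}{J_\nu^2(t)+Y_\nu^2(t)},
\]
factoring out \emph{both} $t^{2\nu}$ and $x-1$ at once, and shows (via the Lagrange/Wronskian argument you outlined) that $g_\nu$ extends continuously to $[1,R]\times[0,\infty)$ and is therefore bounded there; this single bound is the domination for all three limits in the proof.

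This packaging is not merely cosmetic: it is what covers the case $x_0=1$ of \pref{b}, which your split treatment does not. Your $h$ satisfies $\lim_{(x_n,z)\to(1,0)}h(z)=0$, so dominated convergence with $h$ yields only $|y|^{4\nu}P(x_n,y)\to0$; the subsequent factorization $x_n^{\,n-1}-1=(x_n-1)\sum_{k}x_n^k$ is an iterated-limit step that does not justify the \emph{joint} asymptotic $|y|^{4\nu}P(x_n,y)/(x_n-1)\to\text{const}$ as $(x_n,|y|)\to(1,\infty)$. To close this case you must divide by $x_n-1$ \emph{inside} the rescaled integral, exactly as you did for \pref{c}, i.e.\ work with $g_\nu$ rather than $h$. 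The ``genuinely delicate'' uniform boundedness you anticipated is then precisely the boundedness of $g_\nu$, and your mean-value/Wronskian computation is the right tool for it; you simply need to combine the two normalizations into one.
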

\begin{proof}
 Making a substitution  $t|y|=s$ in (\ref{eq:Hn:PoissonForm}) we can rewrite the Poisson kernel in the following way
\begin{eqnarray*}
\frac{P(x_n,y)|y|^{4\nu}}{x_n-1}=\frac{x_n^{\nu}}{\pi^{\nu+1} 2^{\nu-1}} \int_0^\infty g_\nu\left(x_n,\frac{s}{|y|}\right)s^{3\nu} K_{\nu-1}(s)ds\/,
\end{eqnarray*}
where
\begin{eqnarray*}
 g_\nu(x,t)=\frac{1}{t^{2\nu}(x-1)}\frac{J_\nu(t)Y_\nu(xt)-J_\nu(tx)Y_\nu(t)}{J_\nu^2(t)+Y_\nu^2(t)}\/,\quad x>1,t>0\/.
\end{eqnarray*}
Since $s^{3\nu} K_{\nu-1}(s)$ is integrable on $[0,\infty)$ and (\ref{oszacowaniafr}) gives boundedness of  $\left|g_\nu\left(x_n,\frac{s}{|y|}\right)\right|$ for $x_n<R$, we can apply Lebesgue dominated convergence theorem to get
\begin{eqnarray*}
\lim_{(x_n,|y|)\to(1,\infty)}\frac{P(x_n,y)|y|^{4\nu}}{x_n-1}
&=&\frac{1}{\pi^\nu 2^{3\nu-2}\Gamma^2(\nu)}\int_0^\infty s^{3\nu}K_{\nu-1}(s)ds
=\frac{2\Gamma(2\nu)\Gamma(\nu+1)}{\pi^\nu\Gamma^2(\nu)}
\end{eqnarray*}
and
\begin{eqnarray*}
\lim_{(x,|y|)\to(x_0,\infty)}\frac{P(x_n,y)|y|^{4\nu}}{x_n-1}&=&\frac{x_0}{\pi^{\nu+1}2^{\nu-1}}\int_0^\infty \frac{\pi (x_0^{2\nu}-1)s^{3\nu}K_{\nu-1}(s)ds}{2^{2\nu}(x_0-1)\Gamma(\nu)\Gamma(\nu+1)x_0^\nu}\\
&=&\frac{(x_0^{2\nu}-1)\Gamma(2\nu)}{\pi^\nu(x_0-1)\Gamma(\nu)x_0^{\nu-1}}=\frac{\left(\sum_{k=0}^{n-2}x_0^k\right)\Gamma(2\nu)}{\pi^\nu x_0^{\nu-1}\Gamma(\nu)}
\end{eqnarray*}
whenever $x_0>1$. Here we used formula (\ref{eq:JY:limit1}) from Lemma~\ref{granice} and relation (\ref{eq:mBessel:Integral2}). The duplication formula for gamma function gives (\ref{b}). In the same way, using (\ref{eq:JY:limit1}), we get
\begin{eqnarray*}
\lim_{(x_n,|y|)\to(1,y_0)}\frac{P(x_n,y)|y|^{4\nu}}{x-1}&=&\frac{1}{\pi^{\nu+1}2^{\nu-1}}\int_0^\infty \frac{2s^{\nu}|y_0|^{2\nu}K_{\nu-1}(s)ds}{\pi\left(J_\nu^2\left(\frac{s}{|y_0|}\right)+Y_\nu^2\left(\frac{s}{|y_0|}\right)\right)}.
\end{eqnarray*}
which, by substitution $s=ty_0$, proves (\ref{c}).
\end{proof}

As a consequence of Theorem~\ref{thm:Hn:PK:asymptotics} we obtain the following sharp bounds for the Poisson kernel for small $x_n$ and large $|y|$. Similar results have been obtained recently (see Theorem 11 in \cite{BMR3:2011}). The results are more general (there is no restriction for $x_n$ and $|y|$) however the methods of proof are much more complicated.
\begin{cor}
We have
\begin{eqnarray*}
P(x_n,y)\approx\frac{x_n-1}{|y|^{2n-2}},\quad 1< x_n\leq 2,|y|\geq 1\/.
\end{eqnarray*}
\end{cor}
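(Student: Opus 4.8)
The plan is to show that the ratio
$$R(x_n,y)=\frac{|y|^{2n-2}\,P(x_n,y)}{x_n-1}$$
is bounded above and below by strictly positive constants on the region $\{1<x_n\le2,\ |y|\ge1\}$; since $\approx$ means exactly a two-sided bound on this ratio, that is the whole statement. Because $P(x_n,y)$ depends on $y$ only through $r:=|y|$ (the formula \pref{eq:Hn:PoissonForm} involves $|y|$ alone), I regard $R$ as a function of $(x_n,r)$ on $(1,2]\times[1,\infty)$, and recall that $2n-2=4\nu$ with $\nu=(n-1)/2$. First I would record that $R>0$ and $R$ is continuous there: positivity is immediate from the probabilistic representation \pref{eq:Hn:HM:positive}, since the measure $\mu_{x_n}$ is positive and hence $P(x_n,y)>0$; continuity follows from \pref{eq:Hn:PoissonForm} by dominated convergence, using the local bound \pref{oszacowaniafr} on the Bessel cross-term together with the integrability of $t^{\nu}K_{\nu-1}(t|y|)$.

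The key step is to extend $R$ continuously to the compactification $[1,2]\times[1,\infty]$ (one-point compactification in the variable $r$) with strictly positive boundary values, reading these off from Theorem~\ref{thm:Hn:PK:asymptotics}. Along $r\to\infty$, the joint limit \pref{b} gives $R\to L(x_0):=\frac{\Gamma(n/2)}{2^{n-2}\pi^{n/2}}\sum_{k=0}^{n-2}x_0^k$, which is positive and continuous in $x_0\in[1,2]$. Along $x_n\to1$, the joint limit \pref{c} gives $R\to M(r):=\frac{c(r)\,r^{4\nu}}{\pi^{(n+3)/2}2^{(n-5)/2}}$, which is positive because the integrand defining $c(r)$ is positive, and continuous in $r\in[1,\infty)$.

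It then remains to treat the corner $(1,\infty)$. The computation inside the proof of Theorem~\ref{thm:Hn:PK:asymptotics} shows that the unrestricted limit of $R$ there equals $L(1)=\frac{2\Gamma(2\nu)\Gamma(\nu+1)}{\pi^\nu\Gamma^2(\nu)}>0$, and I would check that the two edge limits match this value: $\lim_{x_0\to1}L(x_0)=L(1)$ is trivial, while $\lim_{r\to\infty}M(r)=L(1)$ follows from the small-argument asymptotics $Y_\nu(z)\sim-\frac{\Gamma(\nu)}{\pi}(2/z)^{\nu}$, which in the limit turns $M(r)$ into $\frac{1}{\pi^\nu 2^{3\nu-2}\Gamma^2(\nu)}\int_0^\infty s^{3\nu}K_{\nu-1}(s)\,ds$, exactly the corner value computed in the theorem. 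Consequently $R$ extends to a continuous, strictly positive function $\bar R$ on the compact set $[1,2]\times[1,\infty]$.

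With continuity and positivity on a compact set in hand, the conclusion is immediate: $\bar R$ attains a minimum $c_1>0$ and a maximum $c_2<\infty$, so $c_1\le R(x_n,y)\le c_2$ throughout $(1,2]\times[1,\infty)$, which is the asserted $P(x_n,y)\approx\frac{x_n-1}{|y|^{2n-2}}$. I expect the main obstacle to be precisely the corner $(1,\infty)$: the two asymptotic regimes \pref{b} and \pref{c} approach it transversally (in \pref{b} the Bessel argument $t|y|\to\infty$, whereas in \pref{c} it stays finite), so one must confirm that the iterated and joint limits all agree at $L(1)$. This is where the uniform domination underlying the joint limits in Theorem~\ref{thm:Hn:PK:asymptotics} and the asymptotic evaluation of $M(r)$ as $r\to\infty$ do the real work; everything else is continuity, positivity, and compactness.
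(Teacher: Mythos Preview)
Your proof is correct and follows essentially the same route as the paper: both arguments feed the joint asymptotics of Theorem~\ref{thm:Hn:PK:asymptotics}, the positivity coming from \pref{eq:Hn:HM:positive}, and continuity into a compactness statement to bound the ratio $R$. The only cosmetic difference is packaging---you compactify at $r=\infty$ and run a single compactness argument on $[1,2]\times[1,\infty]$, whereas the paper first extracts a finite subcover of $[1,2]$ from the pointwise limits \pref{b} to handle $|y|>Y$ uniformly, and then treats the remaining compact rectangle $[1,2]\times[1,Y]$ via \pref{c}; the underlying idea is the same.
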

\begin{proof}
Existence and positivity of the limits proved in Theorem~\ref{thm:Hn:PK:asymptotics} imply that for every $x\in[1,2]$ there exist $\varepsilon_x>0$ and $Y_x>0$ and strictly positive constants $c_1(x), c_2(x)$ such that
\begin{eqnarray*}
   c_1(x)\frac{x'-1}{|y|^{2n-2}}\leq {P(x',y)}\leq c_2(x)\frac{x'-1}{|y|^{2n-2}}
\end{eqnarray*}
for every $x'\in [1,2]$ and $y\in\R^{n-1}$ satisfying $|x-x'|<\varepsilon_x$ and $|y|>Y_x$. Since the family of intervals $(x-\varepsilon_x,x+\varepsilon_x)$ is an open cover of $[1,2]$, we can choose the finite subcover $\{(x_i-\varepsilon_{x_i},x_i+\varepsilon_{x_i}), i=1,\ldots m\}$. Putting $Y=\max\{Y_{x_i}: i=1,\ldots,m\}$, $c_1=\min\{c_1(x_i): i=1,\ldots,m\}$ and $c_2=\max\{c_2(x_i): i=1,\ldots,m\}$ we get
\begin{eqnarray}
  \label{eq:Hn:bounds}
   c_1\frac{x_n-1}{|y|^{2n-2}}\leq {P(x_n,y)}\leq c_2\frac{x_n-1}{|y|^{2n-2}}
\end{eqnarray}
for every $x_n\in [1,2]$ and $|y|>Y$. Observe that formula (\ref{eq:Hn:HM:positive}) implies positivity of $P(x_n,y)$. Moreover, by (\ref{c}), we get that the function $P(x_n,y)|y|^{2n-2}/(x_n-1)$ can be continuously extended to the strictly positive function on the compact set $[1,2]\times[1,Y]$ and consequently (\ref{eq:Hn:bounds}) is true also for $|y|\in[1,Y]$ (with possibly different constants $c_1$ and $c_2$). This ends the proof.
\end{proof}

\section{Harmonic measure of hyperbolic balls in $\D^n$}
In this section we consider the harmonic measure of the balls associated with the operator 
\begin{eqnarray}
   \label{HBall:generator:formula}
 \Delta_{LB}= \frac{(1-|x|^2)^2}{4} \sum_{i=1}^n \dfrac{\partial^2}{\partial x_i^2}+(n-2)\frac{1-|x|^2}{2}\sum_{i=1}^n x_i\dfrac{\partial}{\partial x_i}\/.
\end{eqnarray}
This operator appears naturally as the Laplace-Beltrami operator on the ball model of the real hyperbolic space $\D^n$ (see \cite{BM:2007} for more details). 

In particular, if $B=(B_k)$ is the standard n-dimensional Brownian motion then 
 the following system of SDE:
\begin{equation*}
    \label{sde01}
    \frac{dY_k(t)}{1-|Y(t)|^2} = dB_k(t)+2(n-2)Y_k(t)dt, \quad
    k=1,\ldots,n \/.
\end{equation*}
describes a diffusion with values in the real hyperbolic space $\D^n$ , with the generator $2\Delta_{LB}$.
As in the case of half-space model, we perform a change of time defined by:
\begin{equation*}
A(u)= \int_0^u (1-|X(s)|^2)\,ds\,, \quad {\text{and}} \quad \sigma(t) = \inf\{u; A(u)>t\}\,.
\end{equation*}
Then the process defined by
\begin{equation*}
{\tilde{Y}}_k(t) = Y_k(\sigma(t)) 
\end{equation*}
is the diffusion with values in $\D^n$ with the generator $L_2$.

As in the case of $\H^n$ in Section 4,
the harmonic measures of the operators
$\Delta_{LB}$ and $L_2$ are the same.

We consider now the harmonic measure $\omega_r^x$ of a ball $B_r = \{x\in \R^n: |x|<r\}$, $r<1$, supported on the boundary of $B_r$ which is the sphere $S_r^{n-1}$ of radius $r$. We denote by $P_r(x,y)$ the Poisson kernel of $B_r$, i.e. the density of the measure $\omega^x_r$ with respect to the $(n-1)$-dimensional spherical measure $\sigma_r^{n-1}$. As in the previous section we can write the Laplace-Beltrami operator in the form $\frac{(1-|x|^2)^2}2 L_2$, where 
\begin{eqnarray*}
   L_2 = \frac{1}{2}\Delta + F_2(x)\cdot\nabla\/,
\end{eqnarray*} 
with $F_2(x) = \frac{n-2}{1-|x|^2}(x_1,\ldots,x_n)$.

The positivity of the factor $(1-|x|^2)^2/2$ implies that the harmonic functions on the ball $B_r$ for the operators (\ref{HBall:generator:formula}) and $L_2$ are exactly the same and consequently the harmonic measures coincide.

Moreover, the vector field $F_2$ is orthogonal to the sphere $S_r^{n-1}$ and its potential function is $V_2(x)=\frac{2-n}{2}\ln(1-|x|^2)$. We denote by $\tau_r = \inf\{t>0; W(t)\notin B_r\}$ the first exit time of Brownian motion $W(t)$ from a ball $B_r$. Note that $\tau_r$ depends only on the Euclidean norm of $W$.

The martingale $M$, related to the vector field $F_2$, and its quadratic variation are 
\begin{eqnarray*}
M(t)&=&(n-2)\int_0^t\frac{W_i(s)dW_i(s)}{1-|W(s)|^2}\/,\quad 
\<M\>(t)= (n-2)^2\int_0^t\frac{|W(s)|^2ds}{(1-|W(s)|^2)^2}.
\end{eqnarray*}
Observe that the condition \pref{Girsanovzalozenie} is fulfilled in this case, since
\begin{eqnarray*}
\E^x\exp\left[(n-2)^2\int_0^{t\wedge\tau_r} \frac{|W(s)|^2 \/ds}{(1-|W(s)|^2)^2}\right]
&\le&\E^x\exp\left[(n-2)^2 \frac{r^2(t\wedge\tau_r)}{(1-r^2)^2}\right]<\infty\,.
\end{eqnarray*}
By (\ref{N}), the kernel $N(t)$ is of the form
\begin{eqnarray*}
N(t)&=& \left(\frac{1-|W(0)|^2}{1-|W(t)|^2}\right)^{\frac{n-2}{2}}
\exp\left(-\frac{n(n-2)}{2}\int_0^t\frac{ds}{(1-|W(s)|^2)^2}\right)\/,
\end{eqnarray*}
and it is now evident that $\{N(t\wedge\tau_r)\}_{t>0}$ is uniformly bounded in $t$ so the condition \pref{Girsanovuniform}
holds. Applying Theorem \ref{Girsanov} we obtain
\begin{eqnarray}
\label{HBall:Girsanov:formula}
w^x_r(dy) = \left(\frac{1-|x|^2}{1-r^2}\right)^{\frac{n-2}{2}}\E^x\left[\exp \left(-\frac{n(n-2)}{2}\int_0^{\tau_r} \frac{ds}{(1-|W(s)|^2)^2}\right)\/; W(\tau_r)\in dy\right]\/,
\end{eqnarray}

From now on we assume that $x\neq 0$. For $x=0$, from the rotational invariance of the Laplace-Beltrami operator, we easily obtain that $\omega_r^x$ is just $\sigma_r^{n-1}/\sigma_r^{n-1}(S_r^{n-1})$. Recall the skew-product representation of the Brownian motion $$W(t) = R^{(\nu)}(t)\Theta(A^{(\nu)}(t)),$$ where $R^{(\nu)}$ is the Bessel process with index $\nu=\frac{n}{2}-1$ staring from $|x|$ and $\Theta$ is spherical Brownian motion on $S^{n-1}_1$ independent from $R^{(\nu)}$ (see Appendix). Using the fact that $\tau_r$ depends only on $R^{(\nu)}$ we get that $W(\tau_r) = R^{(\nu)}(\tau_r)\Theta(A^{(\nu)}(\tau_r))$, where $\Theta$ is independent from $R^{(\nu)}(\tau_r)$ and $A^{(\nu)}(\tau_r)$. Applying this decomposition to formula (\ref{HBall:Girsanov:formula}) we get
\begin{equation*}
   \omega_r^x(dy) = \left(\frac{1-|x|^2}{1-r^2}\right)^{\frac{n-2}{2}}\int_0^\infty P^{\frac{x}{|x|}}(\Theta_t \in dy)\E^{|x|}\left[\exp\(\int_0^{\tau_r} q(R_s^{(\nu)})ds\);A^{(\nu)}(\tau_r)\in dt\right]\/,
\end{equation*}
where $q(y) = -\frac{n(n-2)}{2(1-y^2)^2}$. Rotational invariance of spherical Brownian motion implies that the harmonic measure $\omega_r^x$ is axially symmetric with axis $x$. As a consequence we get that its density $P_r(x,y)$ depends only on the cosine of angle between starting point $x$ and the point $y$, i.e. 
\begin{eqnarray*}
P_r(x,y)= \tilde{P}_r\(x,\frac{\<x,y\>}{|x||y|}\)\/.
\end{eqnarray*}
If we consider the sets of the form $A = \{\eta \in S_1^{n-1}: \frac{\<x,\eta\>}{|x|}\in (a,b)\}$, where $-1<a<b<1$ we get (for a definition of the process $S$ see Appendix)
\begin{eqnarray*}
   P^{\frac{x}{|x|}}(\Theta_t \in A) = P^1(S_t \in (a,b))
    = \int_a^b p_t^S(1,z)m(dz) = 2 \int_a^b p_t^S(1,z)(1-z^2)^{(n-3)/2}dz\/,
\end{eqnarray*}
where $p_t^S$ is defined in (\ref{S:ptxy:definition}).
From the other side, using the spherical coordinates we obtain
\begin{eqnarray*}
   \omega_r^x(rA) &=& \int_{rA} P_r(x,y)d\sigma_r^{n-1}(y) = r^{n-1} \sigma_r^{n-2}(S_1^{n-2}) \int_{\cos\phi\in (a,b)} \tilde{P}_r(x,\cos\phi)\sin^{n-2}\phi \,d\phi\\
   &=& \frac{n\pi^{\frac{n-1}{2}} r^{n-1} }{\Gamma\(\frac{n+1}{2}\)} \int_a^b \tilde{P}_r(x,z)(1-z^2)^{(n-3)/2} \,dz\/.
\end{eqnarray*}
Comparing both sides we get the following formula for the Poisson kernel $P_r(x,y)$
\begin{eqnarray}
\label{Pformula}
   P_r(x,y) = \frac{\Gamma\(2\frac{n+1}{2}\)}{\pi^{\frac{n-1}{2}} n r^{n-1} }\left(\frac{1-|x|^2}{1-r^2}\right)^{\frac{n-2}{2}}\int_0^\infty p_t^{S}\(1,\frac{\<x,y\>}{|x||y|}\)\mu_{|x|}(dt)\/,
\end{eqnarray}
where
\begin{eqnarray*}
\mu_y(dt) = \E^{y}\left[\exp\(\int_0^{\tau_r} q(R^{(\nu)}(s))ds\);A^{(\nu)}(\tau_r)\in dt\right]\/,\quad y\in (0,r]\/.
\end{eqnarray*}
The formula for $p_t^S$ can be computed from the appropriate formula for the transition density function for $\Theta$, which is given in terms of spherical harmonics and that approach leads to the series representation for $P_r(x,y)$ presented in \cite{BM:2007}. However, we want to compute the Laplace transform of $p_t^{S}$ which is so called $\lambda$-Green function of the process $S$ 
\begin{eqnarray*}
   G_\lambda(x,1) = \int_0^\infty e^{-\lambda t}p_t^S(1,x)dt\/,\quad x\in (-1,1),
\end{eqnarray*}
and we do it directly. From the general theory (see for example \cite{BorodinSalminen:2002} Chapter II for short resume) a function $G_\lambda$ is described by solutions of the second-order differential equation
\begin{eqnarray}
   \label{S:Green:equation}
   \frac{1-x^2}{2}u''(x) - \frac{n-1}{2}\,x\,u'(x) = \lambda u(x)\/,\quad x\in (-1,1)\/.
\end{eqnarray}
Note that the expression on the left-hand side is just $\mathcal{G}u(x)$, where $\mathcal{G}$ is just the generator of $S$ described in (\ref{S:generator}). More precisely, we have
\begin{eqnarray*}
   G_\lambda(x,1) = \frac{\varphi_\lambda(1)\psi_\lambda(x)}{w_\lambda}\/,\quad x\in (-1,1)\/,
\end{eqnarray*}
where $\varphi_\lambda$ is a decreasing and $\psi_\lambda$ is an increasing solution of (\ref{S:Green:equation}) such that $\varphi_\lambda^{-}(1)=0$ and $\psi_\lambda^{+}(-1)=0$.  The boundary conditions for the derivatives follows from the fact that non-singular points $-1$ and $1$ are reflecting. Here $f^{+}$ and $f^{-}$ denote the right and left derivative with respect to the speed function $s(x)$. The Wronskian $w_\lambda$ is given by
\begin{eqnarray*}
   w_\lambda = \psi_\lambda^{-}(x)\varphi_\lambda(x)-\psi_\lambda(x)\varphi^{-}_\lambda(x)
\end{eqnarray*}
and it does not depend on $x$. Putting $x=1$ in the above-given formula and using the boundary conditions we obtain that 
\begin{eqnarray*}
   G_\lambda(x,1) = \frac{\varphi_\lambda(1)\psi_\lambda(x)}{\psi_\lambda^{-}(1)\varphi_\lambda(1)-\psi_\lambda(1)\varphi^{-}_\lambda(1)} = \frac{\psi_\lambda(x)}{\psi^{-}_\lambda(1)}\/,\quad x\in (-1,1)\/.
\end{eqnarray*}
This implies that $G_\lambda(x,1)$ is uniquely described as a solution of (\ref{S:Green:equation}) such that $u^{+}(-1) = 0$ and $u^{-}(1) = 1$. Making a substitution $u(x) = f\(z\)$ with $z= \frac{1+x}{2}$ in the equation (\ref{S:Green:equation}) we reduce it to the following hypergeometric equation
\begin{eqnarray*}
z(1-z)f''(z)+\(\frac{n-1}{2}-(n-1)z\)f'(z)-2\lambda f(z) = 0\/.
\end{eqnarray*}
with $\alpha = \frac{n-2}{2}-A(\lambda)$, $\beta = \frac{n-2}{2} +A(\lambda)$, $\gamma = \frac{n-1}{2}$, where $A(\lambda) = \frac12 \sqrt{(n-2)^2-8\lambda}$.
Consider the function
\begin{eqnarray*}
  h_\lambda(x)&=& \,_2F_1\(\frac{n-2}{2}-A(\lambda),\frac{n-2}{2}+A(\lambda);\frac{n-1}{2};\frac{1+x}{2}\)\/.
  \end{eqnarray*}
The above-given computation implies that the function $h_\lambda$ is a solution of (\ref{S:Green:equation}). Using (\ref{Hyper:derivative}) and (\ref{Hyper:relation}) we compute the derivative of this function  with respect to the scale function $s(x)$ in the following way
\begin{eqnarray*}
  (1-x^2)^{\frac{n-1}{2}}\frac{d}{dx}h_\lambda(x) &=& (1-x^2)^{\frac{n-1}{2}}\frac{2\lambda}{n-1}  \,_2F_1\(\frac{n}{2}-A(\lambda),\frac{n}{2}+A(\lambda);\frac{n+1}{2};\frac{1+x}{2}\)\\
  &=& \frac{2^{\frac{n+1}{2}}\lambda }{n-1}(1+x)^{\frac{n-1}{2}}\,_2F_1\(\frac{1}{2}+A(\lambda),\frac{1}{2}-A(\lambda);\frac{n+1}{2};\frac{1+x}{2}\)\/.
\end{eqnarray*}
The first equality and the fact that the hypergeometric function $\,_2F_1$ is equal to $1$ at zero implies $h_\lambda^{+}(-1) = 0$. Using the second equality and (\ref{Hyper:value1}) we obtain
\begin{eqnarray*}
  h_\lambda^{-}(1) &=& \frac{2^{n}\lambda }{n-1}\,_2F_1\(\frac{1}{2}+A(\lambda),\frac{1}{2}-A(\lambda);\frac{n+1}{2};1\)\\
  &=&  \frac{2^{n-1}\lambda \Gamma^2\(\frac{n-1}{2}\)}{\Gamma\(\frac{n-2}{2}-A(\lambda)\)\Gamma\(\frac{n-2}{2}+A(\lambda)\)}\/.
\end{eqnarray*}
Moreover, using once again (\ref{Hyper:relation}) and the definition (\ref{Legendre:first:formula}) we can express function $h_\lambda$ in terms of the Legendre function of the first kind
\begin{eqnarray*}
   h_\lambda(x) &=& \(\frac{1-x}{2}\)^{\frac{3-n}{2}}\,_2F_1\(\frac{1}{2}-A(\lambda),\frac{1}{2}+A(\lambda);\frac{n-1}{2};\frac{1+x}{2}\)\\
   &=& \frac{(1-x^2)^{\frac{3-n}{4}}}{2^{\frac{3-n}{2}}}\Gamma\(\frac{n-1}{2}\)P_{A(\lambda)-\frac12}^{\frac{3-n}{2}}(-x)\/.
\end{eqnarray*}
Finally we have just obtained that 
\begin{eqnarray*}
   G_\lambda(x,1) &=& B_{n}(\lambda) (1-x^2)^{\frac{3-n}{4}}P_{A(\lambda)-\frac12}^{\frac{3-n}{2}}(-x)\/,
\end{eqnarray*}
where
\begin{eqnarray*}
  B_{n}(\lambda)  = \frac{\Gamma\(\frac{n-2}{2}-A(\lambda)\)\Gamma\(\frac{n-2}{2}+A(\lambda)\)}{2^{\frac{n+1}{2}}\lambda \Gamma\(\frac{n-1}{2}\)}\/.
\end{eqnarray*}

The second part of the formula (\ref{Pformula}) relates to the measure $\mu_y$. Observe that $\mu_y$ depends only on the Bessel process $R^{(\nu)}$, which is one-dimensional diffusion. For every $w\geq 0$ the Laplace transform $\mathcal{L}\mu_y(w)$ is 
\begin{eqnarray*}
\E^{y}\exp\left(-\frac{n(n-2)}{2}\int_0^{\tau_r} \frac{ds}{(1-(R^{(\nu)}(s))^2)^2}-w\int_0^{\tau_r} \frac{ds}{(R^{(\nu)}(s))^2}\right)
 &=& \E^{y}e_g(\tau_r)\/,
\end{eqnarray*}
where $g(y) = -\frac{n(n-2)}{2(1-y^2)^2}-\frac{w}{y^2}$. The function $\varphi(y) = \E^{y}e_g(\tau_r)$ is by definition the gauge function for
the Schr\"o{}dinger operator based on the generator of the process $R^{(\nu)}$ and the non-positive potential $g$. From the Feynman-Kac formula $\varphi$ is a solution of the Schr\"o{}dinger equation. Using (\ref{Bessel:generator}) we obtain that $\varphi$ is a bounded solution to the following second-order differential equation
\begin{eqnarray}
   \label{HBall:Schrodinger:eq}
   \frac{1}{2}\varphi''(y)+\frac{n-1}{2y}\varphi'(y)-\left(\frac{n(n-2)}{2(1-y^2)^2}+\frac{w}{y^2}\right)\varphi(y)=0\/,\quad y\in [0,r)\/, w\geq 0\/,
\end{eqnarray}
with the boundary condition $\varphi(r)=1$. Substituting $\varphi(y) = y^{1-n/2}\psi\left(\frac{1+y^2}{1-y^2}\right)$ we obtain
\begin{eqnarray*}
   \varphi'(y) &=& \left(1-\frac{n}{2}\right)y^{-n/2}\psi\left(\frac{1+y^2}{1-y^2}\right)+\frac{4y^{2-n/2}}{(1-y^2)^2}\psi'\left(\frac{1+y^2}{1-y^2}\right)\/,\\
   \varphi''(y) &=& \frac{n}{2}\left(\frac{n}{2}-1\right)y^{-1-n/2}\psi\left(\frac{1+y^2}{1-y^2}\right)+\frac{16y^{3-n/2}}{(1-y^2)^4}\psi''\left(\frac{1+y^2}{1-y^2}\right)+\\
   &&+\frac{4y^{1-n/2}}{(1-y^2)^3}(3-n+(n+1)y^2)\psi'\left(\frac{1+y^2}{1-y^2}\right)\/.
\end{eqnarray*}
Putting the above given formulas to the differential equation (\ref{HBall:Schrodinger:eq}) and dividing both sides by the factor $-\frac{2y^{1-n/2}}{(1-y^2)^2}$ give
\begin{eqnarray*} 
0&=&\frac{-4y^2}{(1-y^2)^2}\psi''\left(\frac{1+y^2}{1-y^2}\right)-\frac{2(1-y^4)}{(1-y^2)^2}\psi'\left(\frac{1+y^2}{1-y^2}\right)+\\
&&+\left(\frac{(1-y^2)^2}{4y^2}
   \left[\frac{(n-2)^2}{4}+2w\right]+\frac{n(n-2)}{4}\right)\psi\left(\frac{1+y^2}{1-y^2}\right)\/.
\end{eqnarray*}
Moreover, putting $z=\frac{1+y^2}{1-y^2}$ and using the equality $1-\left(\frac{1+y^2}{1-y^2}\right)^2 = \frac{-4y^2}{(1-y^2)^2}$ lead to the following differential equation for $\psi$
\begin{eqnarray*}
   (1-z^2)\psi''(z)-2z\psi'(z)+\(\nu(\nu+1)-\frac{A(-w)^2}{1-z^2}\)\psi(z)=0\/,\quad z\geq 1\/.
\end{eqnarray*}
with $\nu=\frac{n}{2}-1$ and $A(-w)=\frac{1}{2}\sqrt{(n-2)^2+8w}$. This is the Legendre's differential equation (\ref{Legendre:equation}). Thus, the general solution of (\ref{HBall:Schrodinger:eq}) is given by
\begin{equation*}
   \varphi(y) = c_1 \cdot y^{1-n/2}P_\nu^{-A(-w)}\left(\frac{1+y^2}{1-y^2}\right)+c_2\cdot y^{1-n/2}Q_\nu^{-A(-w)}\left(\frac{1+y^2}{1-y^2}\right)\/,\quad y\in[0,r]\/,
\end{equation*}
where $c_1$ and $c_2$ are absolute constants. Using (\ref{Legendre:first:formula}) and (\ref{Legendre:second:formula}) one can easily check that the function $y^{-\nu}P_\nu^{-A(-w)}\(\frac{1+y^2}{1-y^2}\)$ is bounded on the interval  $\left[1,\frac{1+r^2}{1-r^2}\right)$ in contrast to the function $y^{-\nu}Q_\nu^{-A(-w)}\(\frac{1+y^2}{1-y^2}\)$, which is unbounded in the neighborhood of $1$. Thus $c_2=0$ and the boundary condition $\varphi(r)=1$ gives
\begin{eqnarray}
   \label{HBall:Laplace:formula}
   \mathcal{L}\mu_{|x|}(w) = \left(\frac{r}{|x|}\right)^{n/2-1}\frac{P_\nu^{-A(-w)}\(\frac{1+|x|^2}{1-|x|^2}\)}{P_\nu^{-A(-w)}\(\frac{1+r^2}{1-r^2}\)}\/,\quad |x|\leq r\/,w\geq 0\/.
\end{eqnarray}
Now observe that for every complex number $w$ such that $\textnormal{Re}(w)>-\frac{\nu^2}{2} = -\frac{(n-2)^2}{8}$
 \begin{eqnarray*}
    |\mathcal{L}\mu_{|x|}(w)| &\leq& \E^{|x|}\[\exp\left(-\frac{n(n-2)}{2}\int_0^{\tau_r} \frac{ds}{(1-(R_s^{(\nu)})^2)^2}-\textnormal{Re}(w)\int_0^{\tau_r} \frac{ds}{(R_s^{(\nu)})^2}\right)\]\\
     &\leq& \E^{|x|}\exp\left(\frac{(n-2)^2}{8}\int_0^{\tau_r} \frac{ds}{(R_s^{(\nu)})^2}\right) = \E^{|x|}\exp\left(\frac{\nu^2}{2}\int_0^{\tau_r} \frac{ds}{(R_s^{(\nu)})^2}\right) = \left(\frac{r}{x}\right)^\nu\/.
 \end{eqnarray*}
 The last equality follows from (see \cite{BorodinSalminen:2002} 2.20.4 p.407)
 \begin{eqnarray*}
    \textbf{P}_x^{(\nu)}\left(\int_0^{\tau_r}\frac{ds}{(R^{(\nu)}(s))^2}\in dy\right) = \left(\frac{r}{x}\right)^\nu\frac{\ln(r/x)}{\sqrt{2\pi}y^{3/2}}\exp\left(-\frac{\nu^2 y}{2}-\frac{\ln^2(r/x)}{2y}\right)dy\/,\quad y>0\/.
 \end{eqnarray*}
 In particular $\mathcal{L}\mu_{|x|}(-\nu^2/2)$ is finite. This implies that the formula
 \begin{eqnarray*}
    \mathcal{L}\mu_{|x|}(w) = \int_0^\infty e^{-w t}\mu_{|x|}(dt)
 \end{eqnarray*}
 defines a holomorphic function in the complex half-plane $\textnormal{Re}(w)> - v^2/2$. Moreover, for $|z|<1$ the function $\,_2F_1(\alpha,\beta;\gamma;z)/\Gamma(\alpha)$ as a function of $\alpha$ is analytic function on $\C$. Using this fact and the representation of $P_{\nu}^\mu$ in terms of hypergeometric function $\,_2F_1$ 
  we deduce that the function on the right-hand side of (\ref{HBall:Laplace:formula}) is a meromorphic function in the half-plane $\textnormal{Re}(w)>-\nu^2/2$. In fact, the equality (\ref{HBall:Laplace:formula}) implies that the ratio of Legendre functions is analytic for $\textnormal{Re}(w)>-\nu^2/2$ and consequently the function in the denominator has no zeros in this region. Compare this result with Conjecture 5.2 in \cite{BM:2007}. Moreover, we have just proved that (\ref{HBall:Laplace:formula}) holds whenever $\textnormal{Re}(w)>-\nu^2/2$.

Now let $c=-\frac{(n-2)^2}{16}$. We have
\begin{eqnarray*}
    \int_0^\infty p_t^{S}\(1,\frac{\<x,y\>}{|x||y|}\)\mu_{|x|}(dt)
   &=&\int_0^\infty p_t^{S}\(1,\frac{\<x,y\>}{|x||y|}\)\(\frac{1}{2\pi i}\int_{c-i\infty}^{c+i\infty }e^{zt}\mathcal{L}\mu_{|x|}(z)dz\)dt\\
   &=& \frac{1}{2\pi i}\int_{c-i\infty}^{c+i\infty }\mathcal{L}\mu_{|x|}(z)\(\int_0^\infty e^{zt}p_t^{S}\(1,\frac{\<x,y\>}{|x||y|}\)dt\)dz\\
   &=&  \frac{1}{2\pi i}\int_{c-i\infty}^{c+i\infty }\mathcal{L}\mu_{|x|}(z) G_{z}\(1,\frac{\<x,y\>}{|x||y|}\)dtdz\/.
\end{eqnarray*}
Taking into account the previously found formulas for the Laplace transform $\mathcal{L}\mu_{|x|}(z)$ and the Green function $G_{z}(1,x)$ we finally obtain 
\begin{thm}
For every $x\in B_r$, $x\neq 0$ and $y\in S_r^{n-1}$ the Poisson kernel $P_r(x,y)$ is given by the following formula
\begin{equation*}
    \frac{2\Gamma\(\frac{n+1}{2}\)}{\pi^{\frac{n-1}{2}} n r^{n-1} }\left(\frac{1-|x|^2}{1-r^2}\frac{r}{|x|}\right)^{\nu}\frac{\sin^{\frac{3-n}{2}}\varphi}{2\pi i}\int_{c-i\infty}^{c+i\infty} \frac{P_\nu^{-A(-z)}\(\frac{1+|x|^2}{1-|x|^2}\)}{P_\nu^{-A(-z)}\(\frac{1+r^2}{1-r^2}\)} B_n(z)P_{A(z)-\frac12}^{\frac{3-n}{2}}(-\cos\varphi)dz\/,
\end{equation*}
where  $A(z) = \frac{1}{2}\sqrt{(n-2)^2-8z}$,
$B_{n}(z)  = \frac{\Gamma\(\frac{n-2}{2}-A(z)\)\Gamma\(\frac{n-2}{2}+A(z)\)}{2^{\frac{n+1}{2}}z \Gamma\(\frac{n-1}{2}\)}$, $c=-\nu^2/4$ and $\varphi$ is an angle between $x$ and $y$,

\end{thm}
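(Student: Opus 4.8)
The plan is to assemble three ingredients that are already in hand before the statement: the representation \pref{Pformula} of $P_r(x,y)$ as a normalising constant times the power factor $\left(\frac{1-|x|^2}{1-r^2}\right)^{(n-2)/2}$ times the scalar integral $\int_0^\infty p_t^S(1,\frac{\<x,y\>}{|x||y|})\,\mu_{|x|}(dt)$; the explicit Legendre-function formula \pref{HBall:Laplace:formula} for the Laplace transform $\mathcal{L}\mu_{|x|}$; and the explicit Green-function formula $G_\lambda(x,1)=B_n(\lambda)(1-x^2)^{(3-n)/4}P_{A(\lambda)-1/2}^{(3-n)/2}(-x)$ derived just above. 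The only remaining task is to replace the inner integral by the contour-integral representation established immediately before the statement and then to substitute the two closed forms.

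Concretely, I would first abbreviate $\cos\varphi=\frac{\<x,y\>}{|x||y|}$ and recall the identity $\int_0^\infty p_t^S(1,\cos\varphi)\,\mu_{|x|}(dt)=\frac{1}{2\pi i}\int_{c-i\infty}^{c+i\infty}\mathcal{L}\mu_{|x|}(z)\,G_z(1,\cos\varphi)\,dz$ with $c=-\nu^2/4$. Into this I substitute $\mathcal{L}\mu_{|x|}(z)=(r/|x|)^{\nu}\,P_\nu^{-A(-z)}(\tfrac{1+|x|^2}{1-|x|^2})/P_\nu^{-A(-z)}(\tfrac{1+r^2}{1-r^2})$ and $G_z(1,\cos\varphi)=B_n(z)(1-\cos^2\varphi)^{(3-n)/4}P_{A(z)-1/2}^{(3-n)/2}(-\cos\varphi)$, pulling every $z$-independent factor outside the contour.

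The bookkeeping then closes the proof. Since $\tfrac{n-2}{2}=\nu=\tfrac{n}{2}-1$, the factor $\left(\frac{1-|x|^2}{1-r^2}\right)^{(n-2)/2}$ from \pref{Pformula} merges with the $(r/|x|)^{\nu}$ coming from $\mathcal{L}\mu_{|x|}$ into $\left(\frac{1-|x|^2}{1-r^2}\frac{r}{|x|}\right)^{\nu}$; the angular factor simplifies via $(1-\cos^2\varphi)^{(3-n)/4}=\sin^{(3-n)/2}\varphi$; and the remaining scalar constant, which traces back to the normalisation of the spherical measure $\sigma_r^{n-1}$ computed before \pref{Pformula}, collapses to $\frac{2\Gamma((n+1)/2)}{\pi^{(n-1)/2}n r^{n-1}}$. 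Collecting these pieces reproduces the displayed formula.

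I expect the genuine difficulty to lie not in this final substitution but in the two analytic interchanges that precede it and that I am here taking as given: namely that $\mathcal{L}\mu_{|x|}$ continues holomorphically across the imaginary axis onto the half-plane $\re w>-\nu^2/2$ (so that the Bromwich line $\re z=-\nu^2/4$ genuinely inverts it) and that Fubini legitimately swaps the $t$-integral against the contour integral. Granting these, the only residual care in the assembly is to keep the branch of $A(\cdot)=\tfrac12\sqrt{(n-2)^2\mp 8(\cdot)}$ and the sign of the spectral parameter consistent between the gauge-derived factor (Legendre order $-A(-z)$) and the Green-derived factor (Legendre degree $A(z)-\tfrac12$), and to check that the resulting contour integral converges absolutely so that $P_r(x,y)$ is well defined.
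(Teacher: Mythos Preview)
Your proposal is correct and follows essentially the same route as the paper: the theorem is stated as a summary of the computations already carried out in Section~4, and the proof amounts precisely to substituting the closed forms for $\mathcal{L}\mu_{|x|}$ and $G_z(1,\cdot)$ into the Bromwich identity and collecting the $z$-independent prefactors, exactly as you describe. Your cautionary remarks about the analytic continuation of $\mathcal{L}\mu_{|x|}$ to $\re w>-\nu^2/2$ and about the Fubini interchange are also on target; the paper handles the former explicitly just before the theorem and performs the latter without further comment.
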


\section{Harmonic measure of Ornstein-Uhlenbeck process}
As in the previous chapter for fixed $r>0$ we denote $B_r=\{x\in\R^n: |x|<r\}$. Consider a vector field $F_3(x)=\lambda x$, where $\lambda>0$. $F_3$ is a potential vector field with the potential function $V_3(x)={\lambda |x|^2}/{2}$
and as in the previous cases it is orthogonal to the boundary of $B_r$. The corresponding martingale $M$ and its quadratic variation are
\begin{eqnarray*}
M(t)=\lambda \sum_{i=1}^n \int_0^t W_i(s) dW_i(s)\/,\quad \left\langle M\right\rangle(t)=\int_0^t\lambda ^2 |W(s)|^2 ds.
\end{eqnarray*}
The validity of \pref{Girsanovzalozenie} in this case follows from
\begin{eqnarray*}
\E^x[\exp\left\langle M\right\rangle(t\wedge\tau_r)]=\E^x \left[\exp\int_0^{t\wedge\tau_r}\lambda ^2 |W(s)|^2 ds\right]
\le \E^x[\exp [\lambda^2 r^2(t\wedge\tau_r)]]<\infty,
\end{eqnarray*}
where $\tau_r=\inf\{t>0:W(t)\notin B_r\}$. Since
\begin{eqnarray*}
N(t)=\exp{\frac{\lambda(|W(t)|^2-|W(0)|^2)}{2}}
\left[\exp\left(-\frac{1}{2}\int_0^\tau (\lambda^2|W(s)|^2+2n\lambda )ds\right)\right]\/,
\end{eqnarray*}
all the assumptions  of Theorem \ref{Girsanov} are satisfied and consequently we obtain that the harmonic measure $w^x_r(dy)$ of $B_r$ for the operator
$$L_3=\frac{1}{2}\Delta+\lambda x\cdot \nabla$$
is given by
\begin{eqnarray*}
w^x_r(dy)=\exp{\frac{\lambda(r^2-|x|^2)}{2}}\E^x\left[\exp\left(-\frac{1}{2}\int_0^{\tau_r} (\lambda^2|W(s)|^2+2n\lambda )ds\right);\/W(\tau_r)\in dy\right].
\end{eqnarray*}
Computations in this case mimic those introduced in the previous section so we omit some details and present only a sketch of the argumentation. For $x\neq 0$ the skew-product representation of the Brownian motion allows us to write
\begin{equation*}
   \omega_r^x(dy) = e^{\frac{\lambda}{2}(r^2-|x|^2)} \int_0^\infty P^{\frac{x}{|x|}}(\Theta_t \in dy)\E^{|x|}\left[\exp\(\int_0^{\tau_r} q(R^{(\nu)}(s))ds\);\int_0^{\tau_r} \frac{ds}{(R^{(\nu)}(s))^2}\in dt\right]\/,
\end{equation*}
where $q(y) = -\frac{\lambda^2}{2}|x|^2-n\lambda$ and consequently  the Poisson kernel $P_r(x,y)$ is given by
\begin{eqnarray*}
   P_r(x,y) = \frac{2\Gamma\(\frac{n+1}{2}\)}{\pi^{(n-1)/2} n r^{n-1} }e^{\frac{\lambda}{2}(r^2-|x|^2)}\int_0^\infty p_t^{S}\(1,\frac{\<x,y\>}{|x||y|}\)\mu_{|x|}(dt)\/,
\end{eqnarray*}
where
\begin{eqnarray*}
\mu_y(dt) = \E^{y}\left[\exp\(\int_0^{\tau_r} q(R_s^{(\nu)})ds\);\int_0^{\tau_r} \frac{ds}{(R_s^{(\nu)})^2}\in dt\right]\/,\quad y\in (0,r]\/.
\end{eqnarray*}
As previously, the Laplace transform $\mathcal{L}\mu_y(w)$ given by 
\begin{eqnarray}
\label{laplaceOU}
\E^{y}\exp\int_0^{\tau_r}\left( -\frac{\lambda^2}{2}(R_s^{(\nu)})^2-n\lambda-\frac{w}{(R_s^{(\nu)})^2}\right)ds
 = \E^{y}e_g(\tau_r)\/,
\end{eqnarray}
where $g(y) = -\frac{\lambda^2}{2}y^2-n\lambda-\frac{w}{y^2}$, can be identified (by applying the Feynman-Kac formula) as a bounded solution of the following Schr\"o{}dinger equation
\begin{eqnarray*}
   \frac{1}{2}\varphi''(y)+\frac{n-1}{2y}\varphi'(y)-\left(\frac{\lambda^2}{2}y^2+n\lambda+\frac{w}{y^2}\right)\varphi(y)=0\/,\quad y\in (0,r)\/, w\geq 0\/,
\end{eqnarray*}
with the boundary condition $\varphi(r)=1$. Setting $\varphi(y)=y^{-n/2}f(\lambda y^2)$ we reduce this equation to the Whittaker equation
\begin{eqnarray*}
f''(x)+f(x)\left[-\frac{1}{4}-\frac{n}{2x}-\left(\frac{n(n-4)}{16}+\frac{w}{2}\right)\frac{1}{x^2}\right]=0,
\end{eqnarray*}
with parameters $k=-\frac{n}{2}$ and $\mu=\frac{\sqrt{(n-2)^2+8w}}{4}$. Consequently
\begin{eqnarray*}
\varphi(y)=y^{-\frac{n}{2}}\left[c_1 M(k,\mu,\lambda y^2)+c_2W(k,\mu,\lambda y^2)\right],
\end{eqnarray*}
where $M$ and $W$ are Whittaker functions (see \cite{AbramowitzStegun:1972} 13.1.32, 13.1.33 p.505). The boundedness of $\varphi$ implies that $c_2=0$ and the boundary condition $\varphi(r)=1$ gives
\begin{eqnarray}
  \label{eq:OU:laplace}
  \mathcal{L}\mu_y(w)=\left(\frac{r}{y}\right)^{\frac{n}{2}}\frac{M(k,\mu,\lambda y^2)}{M(k,\mu,\lambda r^2)}.
\end{eqnarray}  
If we look at (\ref{laplaceOU}), the probabilistic definition of $\mathcal{L}\mu_y(w)$, by the same argument as previously gives that $\mathcal{L}\mu_y(w)$ can be extended to holomorphic function in the complex half-plane $\textnormal{Re} (w)>-\frac{(n-2)^2}{8}$. Since the Whittaker functions are well defined in this region the formula (\ref{eq:OU:laplace}) is also satisfied in this region. As before we use the Laplace inverse formula and for $c=-\frac{(n-2)^2}{16}$ obtain
\begin{eqnarray*}
    \int_0^\infty p_t^{S}\(1,\frac{\<x,y\>}{|x||y|}\)\mu_{|x|}(dt)
   &=&\int_0^\infty p_t^{S}\(1,\frac{\<x,y\>}{|x||y|}\)\(\frac{1}{2\pi i}\int_{c-i\infty}^{c+i\infty }e^{zt}\mathcal{L}\mu_{|x|}(z)dz\)dt\\
   &=& \frac{1}{2\pi i}\int_{c-i\infty}^{c+i\infty }\mathcal{L}\mu_{|x|}(z)\(\int_0^\infty e^{zt}p_t^{S}\(1,\frac{\<x,y\>}{|x||y|}\)dt\)dz\\
   &=&  \frac{1}{2\pi i}\int_{c-i\infty}^{c+i\infty }\mathcal{L}\mu_{|x|}(z) G_{z}\(1,\frac{\<x,y\>}{|x||y|}\)dtdz\/.
\end{eqnarray*}
\begin{thm}
For every $x\in B_r$, $x\neq 0$ and $y\in S_r^{n-1}$ the Poisson kernel $P_r(x,y)$ is given by the following formula
\begin{equation*}
    \frac{2\Gamma\(\frac{n+1}{2}\)}{\pi^{\frac{n-1}{2}} n r^{n-1} }\left(\frac{1-|x|^2}{1-r^2}\right)^{\frac{n-2}{2}}\left(\frac{r}{|x|}\right)^{\frac{n}{2}}\frac{\sin^{\frac{3-n}{2}}\varphi}{2\pi i}\int_{c-i\infty}^{c+i\infty} \frac{M(k,\mu,\lambda y^2)}{M(k,\mu,\lambda r^2)} B_n(z)P_{A(z)-\frac12}^{\frac{3-n}{2}}(-\cos\varphi)dz\/,
\end{equation*}
where $\varphi$ is an angle between $x$ and $y$, $A(z) = \frac{1}{2}\sqrt{(n-2)^2-8z}$ and
\begin{eqnarray*}
B_{n}(z)  = \frac{\Gamma\(\frac{n-2}{2}-A(z)\)\Gamma\(\frac{n-2}{2}+A(z)\)}{2^{\frac{n+1}{2}}z \Gamma\(\frac{n-1}{2}\)}\/.
\end{eqnarray*}
\end{thm}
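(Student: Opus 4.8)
The plan is to run, almost verbatim, the argument of Section~4, exploiting the fact that the only structural difference between the hyperbolic ball and the Ornstein--Uhlenbeck ball lies in the \emph{radial} part of the diffusion: after the time change and the skew-product decomposition, the angular factor, and hence both the spherical density $p_t^S$ and its $\lambda$-Green function $G_z(1,\cdot)$, is governed solely by the sphere $S_1^{n-1}$ and is therefore identical to the one already computed in Section~4. First I would apply Theorem~\ref{Girsanov} to $F_3(x)=\lambda x$, with potential $V_3(x)=\lambda|x|^2/2$. Orthogonality of $F_3$ to $S_r^{n-1}$ is clear, condition \pref{Girsanovzalozenie} follows from $|W(s)|\le r$ for $s\le\tau_r$, and condition \pref{Girsanovuniform} from the uniform boundedness of $N(t\wedge\tau_r)$ recorded above; this yields $w_r^x(dy)$ with the exponential prefactor $e^{\lambda(r^2-|x|^2)/2}$.

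Next I would insert the skew-product representation $W(t)=R^{(\nu)}(t)\,\Theta(A^{(\nu)}(t))$ with $\nu=\tfrac n2-1$, splitting the angular contribution $P^{x/|x|}(\Theta_t\in dy)$ from the purely radial measure $\mu_{|x|}(dt)$, exactly as in the passage leading to \pref{Pformula}. This reduces the task to evaluating the single integral $\int_0^\infty p_t^S\big(1,\tfrac{\langle x,y\rangle}{|x||y|}\big)\,\mu_{|x|}(dt)$, whose angular ingredient is reused unchanged from Section~4.

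The genuinely new computation is the radial Laplace transform $\laplace\mu_{|x|}(w)$. I would identify it, via Feynman--Kac, as the gauge function $\E^{|x|}e_g(\tau_r)$ for the Schr\"odinger operator built on the Bessel generator with potential $g(y)=-\tfrac{\lambda^2}{2}y^2-n\lambda-w/y^2$, hence as the bounded solution of the displayed radial ODE with $\varphi(r)=1$. The substitution $\varphi(y)=y^{-n/2}f(\lambda y^2)$ turns this into Whittaker's equation with $k=-\tfrac n2$ and $\mu=\tfrac14\sqrt{(n-2)^2+8w}$; boundedness at the origin kills the $W(k,\mu,\cdot)$ term and the normalization yields \pref{eq:OU:laplace}. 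As in Section~4, the probabilistic formula \pref{laplaceOU} together with the finiteness of $\E^{|x|}\exp\big(\tfrac{\nu^2}{2}\int_0^{\tau_r}ds/(R^{(\nu)}_s)^2\big)=(r/|x|)^\nu$, quoted from \cite{BorodinSalminen:2002}, shows that $\laplace\mu_{|x|}$ extends holomorphically to $\re w>-(n-2)^2/8$, so that \pref{eq:OU:laplace} persists there and the Laplace inversion along the line $\re z=c=-(n-2)^2/16$ is legitimate.

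Finally I would interchange the $t$-integral with the contour integral by Fubini, replacing $\int_0^\infty e^{zt}p_t^S(1,\cos\varphi)\,dt$ by the $\lambda$-Green function $G_z(1,\cos\varphi)=B_n(z)\,(1-\cos^2\varphi)^{(3-n)/4}\,P_{A(z)-\frac12}^{\frac{3-n}{2}}(-\cos\varphi)$ taken intact from Section~4, and then substitute the Whittaker ratio \pref{eq:OU:laplace}; writing $(1-\cos^2\varphi)^{(3-n)/4}=\sin^{(3-n)/2}\varphi$ and gathering the prefactors produces the asserted formula. The main obstacle is the rigorous justification of this interchange and of the inversion: it requires absolute integrability of $z\mapsto\laplace\mu_{|x|}(z)\,G_z(1,\cos\varphi)$ along the vertical line $\re z=c$, which rests on the large-parameter asymptotics of the Whittaker function $M(k,\mu,\cdot)$ (here $|\mu|\sim\tfrac14\sqrt{8|z|}\to\infty$ as $|\im z|\to\infty$) and of the Legendre function $P_{A(z)-\frac12}^{(3-n)/2}$. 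Controlling the decay of this product as $|\im z|\to\infty$ is the real analytic content; the remainder is the algebraic assembly of ingredients already prepared above.
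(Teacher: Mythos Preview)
Your proposal is correct and follows essentially the same route as the paper: Girsanov with $F_3(x)=\lambda x$, skew-product splitting, Feynman--Kac for the radial Laplace transform reduced to Whittaker's equation via $\varphi(y)=y^{-n/2}f(\lambda y^2)$, analytic continuation to $\re w>-(n-2)^2/8$, and Laplace inversion combined with the Section~4 formula for $G_z(1,\cdot)$. Your derivation (like the paper's own computations preceding the theorem) produces the prefactor $e^{\lambda(r^2-|x|^2)/2}$ rather than the factor $\bigl(\tfrac{1-|x|^2}{1-r^2}\bigr)^{(n-2)/2}$ appearing in the displayed statement, which is evidently a copy-paste artifact from the hyperbolic theorem; and you are right that the Fubini/inversion step is asserted rather than fully justified in the paper as well.
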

\section{Appendix}
For convenience of the Reader we collect here basic information about Bessel functions, hypergeometric functions and other special functions appearing throughout the paper. Mainly we follow the exposition given in \cite{AbramowitzStegun:1972} and \cite{Erdelyi:1955}, where we refer the Reader for more details (see also \cite{Lebedev:1972} and \cite{GradsteinRyzhik:2007}). 

\subsection{Bessel functions}
The Bessel functions $J_\nu(z)$ and $Y_\nu(z)$ are independent solutions of the Bessel equation
\begin{eqnarray*}
   z^2 y''(z) + zy'(z) + (z^2-\nu^2)y(z) = 0\/, \quad \nu\in \R\/.
\end{eqnarray*}
The \textit{Wronskian} of the pair $(J_\nu(z),Y_\nu(z))$ is equal to $2/{(\pi z)}$ (see \cite{Lebedev:1972}, p.113). 
The derivatives of Bessel functions can be expressed by itself in the following way
\begin{eqnarray}
\label{Jderivative}
J_\nu'(x)&=&J_{\nu-1}(x)-\frac{\nu}{x}J_\nu(x)\/,\quad x>0\/,\\
\label{Yderivative}
Y_\nu'(x)&=&Y_{\nu-1}(x)-\frac{\nu}{x}Y_\nu(x)\/,\quad x>0.
\end{eqnarray}
For every $\nu>0$ we have (see \cite{Lebedev:1972} 5.16 p.134-135).
\begin{eqnarray}
\label{eq:mBessel:asymptotyka0}
J_\nu(x)\sim\frac{x^\nu}{2^\nu \Gamma(\nu+1)}&\/,&\quad 
Y_\nu(x)\sim-\frac{2^\nu \Gamma(\nu)}{\pi}\frac1{x^\nu}\/,\quad x\to 0^{+}\/.\\
\label{eq:mBessel:asymptotykainfty}
J_\nu(x)\sim\sqrt{\frac{2}{x\pi}}\cos(x-\frac{\nu\pi}{2}-\frac{\pi}{4})&\/,&\quad 
Y_\alpha(x)\sim\sqrt{\frac{2}{x\pi}}\sin(x-\frac{\nu\pi}{2}-\frac{\pi}{4}),\quad x\to\infty\/.
\end{eqnarray}
The modified Bessel functions are independent solutions to the modified Bessel equation
\begin{eqnarray}
\label{eq:mBessel:Eq}
z^2 y''(z) + zy'(z) - (\nu^2+z^2)y(z) = 0\/.
\end{eqnarray}
The following asymptotic expansion for the function $K_\nu$ holds (see \cite{GradsteinRyzhik:2007} 8.451 (6))
\begin{eqnarray}
   \label{eq:mBessel:Asym}
   K_\nu(z) = \sqrt{\frac{\pi}{2z}} \,e^{-z}(1+E(z))\/,\quad |E(z)|=O(|z|^{-1})\textrm{ as }|z|\to \infty\/.
\end{eqnarray}
whenever $|\arg z|\leq 3\pi/2$ . The behavior of $K_\nu$ near zero is described by (see \cite{AbramowitzStegun:1972}, 9.6.9)
\begin{eqnarray}
   \label{eq:mBessel:AsymZero}
   K_\nu(z) \approx \frac{2^{\nu-1}\Gamma(\nu)}{z^{\nu}}\/,\quad \re z> 0\/.
\end{eqnarray}
The connection between modified Bessel function of purely imaginary argument and Bessel functions is given by
\begin{eqnarray}
   \label{eq:mBessel:ImArg}
   K_\nu(ix) = -\frac{i\pi}{2}e^{-i\nu \pi/2}(J_\nu(x)-iY_\nu(x))\/,\quad x>0\/.
\end{eqnarray}
Finally, we recall the integral representation of $K_\nu$ (see \cite{GradsteinRyzhik:2007}, 8.432 (7))
\begin{eqnarray}
\label{eq:mBessel:Integral}
K_\vartheta(z) = \frac{z^\vartheta}{2}\int_0^\infty \exp\(-\frac{t+z^2/t}{2}\)t^{-\vartheta-1}dt\/,\quad z>0\/,\vartheta \in \R\/,
\end{eqnarray}
as well as the formula
(\cite{GradsteinRyzhik:2007} 6.561 formula 16, p. 676)
\begin{eqnarray}\label{eq:mBessel:Integral2}
\int_0^\infty x^\mu K_\nu(x)dx = 2^{\mu-1}\Gamma\left(\frac{1+\mu+\nu}{2}\right)\Gamma\left(\frac{1+\mu-\nu}{2}\right),\quad \mu+1>\nu>0.
\end{eqnarray}
For every $\nu>1/2$ we introduce the following function of two variables
\begin{eqnarray*}
   g_\nu(x,t)=\frac{1}{t^{2\nu}(x-1)}\frac{J_\nu(t)Y_\nu(tx)-J_\nu(tx)Y_\nu(t)}{J^2_\nu(t)+Y_\nu^2(t)}\/,\quad x>1\/,t>0\/.
\end{eqnarray*}
It is obvious that $g_\nu$ is a continuous function on $(1,\infty)\times \R_+$. However, the most crucial for the considerations given in Chapter~\ref{Ch:Hn} are the following asymptotic properties of $g_\nu$.

\begin{lem}
\label{granice}
Set $x_0\ge 1$, $t_0>0$. Then
\begin{eqnarray}
\label{eq:JY:limit1}
\lim_{(x,t)\to(x_0,0)}g_\nu(x,t)&=&\frac{\pi\sum_{k=0}^{n-2}x_0^k}{2^{2\nu}\Gamma(\nu)\Gamma(\nu+1)x_0^\nu},\\
\label{eq:JY:limit2}
\lim_{(x,t)\to(1,t_0)}g_\nu(x,t)&=&\frac{2}{\pi t_0^{2\nu}\left(J_\nu^2(t_0)+Y_\nu^2(t_0)\right)}\/.\label{3}
\end{eqnarray}
\end{lem}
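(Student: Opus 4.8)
The two limits are of different character, so I would establish them separately. The limit \pref{eq:JY:limit2} near $t=t_0>0$ is governed by a cancellation in the cross term at $x=1$, while \pref{eq:JY:limit1} near $t=0$ requires the small-argument asymptotics \pref{eq:mBessel:asymptotyka0} of $J_\nu$ and $Y_\nu$. Throughout I would abbreviate the numerator as $N(x,t)=J_\nu(t)Y_\nu(tx)-J_\nu(tx)Y_\nu(t)$, so that
\begin{equation*}
g_\nu(x,t)=\frac{N(x,t)}{(x-1)\,t^{2\nu}\bigl(J_\nu^2(t)+Y_\nu^2(t)\bigr)}.
\end{equation*}

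For \pref{eq:JY:limit2} the key observation is that $N(1,t)=0$, which matches the factor $x-1$ in the denominator. Differentiating in $x$ and using \pref{Jderivative}--\pref{Yderivative} gives
\begin{equation*}
\partial_x N(x,t)=t\bigl(J_\nu(t)Y_\nu'(tx)-J_\nu'(tx)Y_\nu(t)\bigr),
\end{equation*}
so at $x=1$ the bracket is precisely the Wronskian $J_\nu(t)Y_\nu'(t)-J_\nu'(t)Y_\nu(t)=2/(\pi t)$ recalled in the Appendix, whence $\partial_x N(1,t)=2/\pi$. Writing $N(x,t)/(x-1)=\int_0^1\partial_x N\bigl(1+s(x-1),t\bigr)\,ds$ exhibits this quotient as a jointly continuous function near $(1,t_0)$ with value $2/\pi$ at $x=1$; since $J_\nu^2(t_0)+Y_\nu^2(t_0)>0$ (the two functions cannot vanish simultaneously, their Wronskian being nonzero), passing to the joint limit yields exactly \pref{eq:JY:limit2}.

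For \pref{eq:JY:limit1} I would insert the leading behaviour from \pref{eq:mBessel:asymptotyka0}, namely $J_\nu(z)\sim z^\nu/(2^\nu\Gamma(\nu+1))$ and $Y_\nu(z)\sim-2^\nu\Gamma(\nu)/(\pi z^\nu)$ as $z\to0^+$, into each factor of $N(x,t)$ with arguments $z=t$ and $z=tx$. The two $O(1)$ main terms do \emph{not} cancel; they combine to
\begin{equation*}
N(x,t)=\frac{\Gamma(\nu)}{\pi\Gamma(\nu+1)}\bigl(x^{\nu}-x^{-\nu}\bigr)+o(1),\qquad t\to0,
\end{equation*}
while the denominator is dominated by $Y_\nu^2$, giving $t^{2\nu}\bigl(J_\nu^2(t)+Y_\nu^2(t)\bigr)\to 2^{2\nu}\Gamma(\nu)^2/\pi^2$. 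Dividing, $g_\nu(x,t)$ tends to $\pi\bigl(x^{\nu}-x^{-\nu}\bigr)/\bigl(2^{2\nu}\Gamma(\nu)\Gamma(\nu+1)(x-1)\bigr)$. I would then rewrite $x^\nu-x^{-\nu}=x^{-\nu}(x^{2\nu}-1)$ and use $2\nu=n-1$ with the geometric sum $x^{2\nu}-1=(x-1)\sum_{k=0}^{n-2}x^k$ to recognise the limiting value as $\pi\sum_{k=0}^{n-2}x_0^k/\bigl(2^{2\nu}\Gamma(\nu)\Gamma(\nu+1)x_0^\nu\bigr)$, which is \pref{eq:JY:limit1}; the apparent singularity at $x_0=1$ is removable.

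The main obstacle is the joint character of the limit in \pref{eq:JY:limit1}: since $x$ and $t$ approach their limits simultaneously, I cannot iterate one-variable limits but must control the error terms in \pref{eq:mBessel:asymptotyka0} uniformly as the arguments $t$ and $tx$ both run to $0$ with $x$ in a fixed neighbourhood of $x_0$. The cleanest remedy is to use the full power-series expansions of $J_\nu$ and $Y_\nu$: the singular part of $Y_\nu$ carries only relative corrections of order $z^2$, and its regular part contributes merely an $O(t^{2\nu})$ cross term, so the displayed convergence of $N(x,t)$ and of $t^{2\nu}(J_\nu^2+Y_\nu^2)$ holds locally uniformly in $x$. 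Combined with the continuity in $x$ of the limiting expression at $x_0$, this legitimises the passage to the joint limit.
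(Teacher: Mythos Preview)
Your treatment of \pref{eq:JY:limit2} and of \pref{eq:JY:limit1} for $x_0>1$ is correct and essentially coincides with the paper's: the paper uses the Lagrange mean-value theorem where you use the integral form $N(x,t)/(x-1)=\int_0^1\partial_xN(1+s(x-1),t)\,ds$, and both reduce the quotient at $x=1$ to the Wronskian $2/(\pi t)$; for $x_0>1$ both arguments insert the small-argument asymptotics \pref{eq:mBessel:asymptotyka0} directly.

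There is, however, a genuine gap in your handling of the joint limit \pref{eq:JY:limit1} at $x_0=1$. Your remedy establishes that $N(x,t)\to\frac{\Gamma(\nu)}{\pi\Gamma(\nu+1)}(x^\nu-x^{-\nu})$ and $t^{2\nu}(J_\nu^2+Y_\nu^2)\to 2^{2\nu}\Gamma(\nu)^2/\pi^2$ locally uniformly in $x$, and then appeals to the continuity of the limiting expression at $x_0$. But the principle ``uniform convergence in $x$ plus continuity of the limit implies the joint limit'' requires the uniform convergence of $g_\nu(x,t)$ itself on a neighbourhood of $x_0$. Because of the factor $1/(x-1)$, locally uniform convergence of $N(x,t)$ only yields locally uniform convergence of $g_\nu$ on compacta of $(1,\infty)$, not on any set containing $x=1$. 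A counterexample to the inference you are using is $f(x,t)=t/(x-1+t)$: here $f(x,t)\to 0=L(x)$ locally uniformly on $(1,\infty)$ and $L$ extends continuously to $1$, yet the joint limit at $(1,0)$ fails.

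The fix is already in your hands: reuse the representation $N(x,t)/(x-1)=\int_0^1\partial_xN(1+s(x-1),t)\,ds$ from your proof of \pref{eq:JY:limit2} and show that $\partial_xN(y,t)=t\bigl(J_\nu(t)Y_\nu'(ty)-J_\nu'(ty)Y_\nu(t)\bigr)$ has a finite limit as $t\to0$, uniformly for $y$ in a neighbourhood of $1$. This is exactly what the paper does: it rewrites $\partial_xN$ via the recurrences \pref{Jderivative}--\pref{Yderivative} and evaluates the limit term by term using \pref{eq:mBessel:asymptotyka0}, obtaining $2/\pi$ at $y=1$. With that in place the quotient $N(x,t)/(x-1)$ converges uniformly on $[1,1+\varepsilon]$, and your continuity argument then legitimately closes the joint limit.
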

\begin{proof}
\noindent From the Lagrange theorem, there exist $\theta_1,\theta_2\in(1,x)$, depending on $x$ and such that
\begin{eqnarray*}
\frac{J_\nu(t)Y_\nu(tx)-J_\nu(tx)Y_\nu(t)}{tx-t}
&=&J_\nu(t)\frac{Y_\nu(tx)-Y_\nu(t)}{tx-t}-Y_\nu(t)\frac{J_\nu(tx)-J_\nu(t)}{tx-t}\\
&=&J_\nu(t)Y'_\nu(t\theta_1)-J'_\nu(t\theta_2)Y_\nu(t).
\end{eqnarray*}
Obviously, when $x\to 1$ then $\theta_1$, $\theta_2$ also tend to $1$. Furthermore, since the Wronskian of $(J_\nu(z), Y_\nu(z))$ is $2/\pi z$ we get that
\begin{eqnarray*}
\lim_{(x,t)\to(1,t_0)}\frac{J_\nu(t)Y_\nu(tx)-J_\nu(tx)Y_\nu(t)}{tx-t}&=&J_\nu(t_0)Y'_\nu(t_0)-J'_\nu(t_0)Y_\nu(t_0) = \frac{2}{\pi t_0}\/,
\end{eqnarray*}
which proves (\ref{3}).
\noindent If we use the recurrent formulas for the Bessel function derivatives (\ref{Jderivative}) and (\ref{Yderivative}) we obtain that $J_\nu(t)Y'_\nu(t\theta_1)-J'_\nu(t\theta_2)Y_\nu(t)$ is equal to 
\begin{eqnarray*}
J_\nu(t)Y_{\nu-1}(t\theta_1)-\frac{\nu}{t}\left(\frac{J_\nu(t)Y_\nu(t\theta_1)}{\theta_1}-\frac{J_\nu(t\theta_1)Y_\nu(t)}{\theta_2}\right)-J_{\nu-1}(t\theta_2)Y_\nu(t)
\end{eqnarray*}
\noindent Multiplying the last expression by $t$, taking $(x,t)\to(1,0)$ and using (\ref{eq:mBessel:asymptotyka0}) it is easy to see that first two summands tends to zero and the last one tends to $2/\pi$. Since, by (\ref{eq:mBessel:asymptotyka0}), we have $\lim_{t\to 0^+}t^{2\nu}(J^2_\nu(t)+Y^2_\nu(t))=2^{2\nu}\Gamma^2(\nu)/\pi^2$ and consequently
\begin{eqnarray*}
\lim_{(x,t)\to(1,0)}g_\nu(x,t)&=&\frac{\pi}{2^{2\nu-1}\Gamma^2(\nu)}\/,
\end{eqnarray*}
which is (\ref{eq:JY:limit1}) for $x_0=1$. For $x_0>1$ relation (\ref{eq:JY:limit1}) follows directly from (\ref{eq:mBessel:asymptotyka0}) 
\begin{eqnarray*}
   \lim_{(x,t)\to(x_0,0)}g_\nu(x,t) &=& \frac{\pi(x_0^\nu-x_0^{-\nu})}{\nu2^{2\nu}\Gamma^2(\nu)} = \frac{\pi\sum_{k=0}^{n-2}x_0^k}{2^{2\nu}\Gamma(\nu)\Gamma(\nu+1)x_0^\nu}\/.
\end{eqnarray*}
\end{proof}
Note that Lemma~\ref{granice} together with (\ref{eq:mBessel:asymptotyka0}) and (\ref{eq:mBessel:asymptotykainfty}) imply that the function $g_\nu$ can be extended to the continuous function on $[1,\infty)\times[0,\infty)$ which is bounded whenever $x$ is bounded, i.e. for every $R>1$ there exists $C(R)>0$ such that
\begin{eqnarray}
\label{oszacowaniafr}
    |g_\nu(x,t)|\leq C(R)\/,\quad (x,t)\in[1,R]\times[0,\infty)\/.
\end{eqnarray}

\subsection{Hypergeometric Function and Legendre Functions}
For $\gamma \neq -1,-2,\ldots$ the hypergeometric function is defined by
\begin{eqnarray*}
 _2F_1(\alpha,\beta;\gamma;z) = \sum_{k=0}^\infty \frac{(\alpha)_k(\beta_k)}{(\gamma_k)k!}z^k\/,\quad |z|<1\/,
\end{eqnarray*}
Here $(a)_k = \Gamma(a + k)/\Gamma(a)$. Function $_2F_1$ is a solution of the hypergeometric equation 
\begin{eqnarray}
   \label{Hyper:equation}
   z(1-z)u''(z)+[\gamma-(\alpha+\beta+1)z]u'(z)-\alpha\beta u(z) = 0
\end{eqnarray}
regular at $z=0$.  Whenever $\textnormal{Re }(\gamma-\alpha-\beta)>0$ we have (see \cite{Erdelyi:1955} vol.1 p.104 2.8(46))
\begin{eqnarray}
\label{Hyper:value1}
  _2F_1(\alpha,\beta;\gamma;1) = \frac{\Gamma(\gamma)\Gamma(\gamma-\alpha-\beta)}{\Gamma(\gamma-\alpha)\Gamma(\gamma-\beta)}\/.
\end{eqnarray}
The derivative of $_2F_1$ is given by (see \cite{Erdelyi:1955} vol.1 p.102 2.8(20))
\begin{eqnarray}
   \label{Hyper:derivative}
   \dfrac{d}{dz}\,_2F_1(\alpha,\beta;\gamma;z) = \frac{\alpha\beta}{\gamma}\,_2F_1(\alpha+1,\beta+1;\gamma+1;z)\/.
\end{eqnarray}
and the following elementary relation holds (see \cite{Erdelyi:1955} vol.1 p.105 2.9(2))
\begin{eqnarray}
  \label{Hyper:relation}
  \,_2F_1(\alpha,\beta;\gamma;z) = (1-z)^{\gamma-\alpha-\beta}\,_2F_1(\gamma-\alpha,\gamma-\beta;\gamma;z)\/.
\end{eqnarray}
The Legendre functions are solutions of Legendre's differential equation 
\begin{eqnarray}
    \label{Legendre:equation}
   (1-z^2)u''(z)-2zu'(z)+[a(a+1)-b^2(1-z^2)^{-1}]u(z) = 0\/.
\end{eqnarray}
Making appropriate substitution it can be reduced to the hypergeometric equation (\ref{Hyper:equation}) and consequently its solutions are given in terms of hypergeometric function. More precisely, the Legendre function of the first and second kind are defined by (see \cite{Erdelyi:1955} vol.1 p.122 3.2(3) and p.143 3.4(6))
\begin{eqnarray}
   \label{Legendre:first:formula}
   P_a^b(x) &=& \frac{1}{\Gamma(1-b)}\left(\frac{1+x}{|1-x|}\right)^{b/2}\,_2F_1(-a,a+1;1-b;\frac{1-x}{2})\/,\quad x>-1\/.\\
   \label{Legendre:second:formula}
   Q_a^b(x) &=&\frac{ e^{bi\pi}\pi^{1/2}}{2^{a+1}x^{a+b+1}}\frac{\Gamma(a+b+1)}{\Gamma(a+3/2)}(x^2-1)^{b/2}\times\\
   \nonumber
   &&\times \,_2F_1(\frac{a+b}{2}+1,\frac{a+b+1}{2};a+3/2;\frac{1}{x^2})\/,\quad x>1\/,
\end{eqnarray}
respectively. The functions $P_a^b$ and $Q_a^b$ are independent solutions of (\ref{Legendre:equation}).

\subsection{Skew-product representation of Brownian motion}
We now introduce the spherical Brownian motion on the unit sphere ${S}^{n-1}_1 \subseteq \R^{n}$ as a
diffusion on $S^{n-1}_1$ with the generator being the one half of the Laplace-Beltrami operator $\Delta_{S^{n-1}_1}$ 
of the manifold $S^{n-1}_1$. It is well-known that the following formula holds 
\begin{equation*}
\Delta_{S^{n-1}_1} = (\sin\phi)^{2-n}\,\frac{\partial}{\partial\,\phi}[(\sin\phi)^{n-2}\,\frac{\partial}{\partial\,\phi}]+
(\sin\phi)^{-2}\Delta_{S^{n-2}_1}\,,
\end{equation*}
where the angle $\phi$ is the angle between the pole and the given point on the sphere and $\Delta_{S^1_1}=\frac{\partial^2}{\partial\,\phi^2}$. Now, if we consider the action of $\Delta_{S^{n-1}_1}$ on function depending only on $\phi$, 
this reduces to the generator of the Legendre process $LEG(d)$:
\begin{equation*}
\frac12 \Delta_{S^{n-1}_1} = \frac12(\sin\phi)^{2-n}\,\frac{\partial}{\partial\,\phi}[(\sin\phi)^{n-2}\,\frac{\partial}{\partial\,\phi}]=\frac{1}{2}\frac{\partial^2}{\partial\,\phi^2} + \frac{n-2}{2}\,\cot\,\phi\,\frac{\partial}{\partial\,\phi}
\end{equation*}
Changing variable $\cos\,\phi = t$ we obtain
\begin{equation*}
\frac{1-t^2}{2}\frac{\partial^2}{\partial\,t^2} - \frac{n-1}{2}\,t\,\frac{\partial}{\partial\,t}\/.
\end{equation*}
We now invoque the classical skew-product representation of the $n$-dimensional Brownian motion 
(see, e.g. Ito-McKeane 7.15) stating that it can be represented as the product of $R^{(\nu)}=\{R_t^{(\nu)};t\geq 0\}$ - the Bessel process BES($n$), $\nu=n/2-1$ with the generator
\begin{equation}
\label{Bessel:generator}
\frac{1}{2}\frac{\partial^2}{\partial\,r^2} + \frac{n-1}{2r}\,\frac{\partial}{\partial\,r}\,,
\end{equation}
and independent spherical Brownian motion $\Theta=\{\Theta(t);t\geq 0\}$ on $S^{n-1}_1$ with time changed according to the formula
\begin{equation*}
A^{(\nu)}(t) = \int_0^t \frac{ds}{(R^{(\nu)}(s))^2}\/.
\end{equation*}
Moreover, for $x\neq 0$, we introduce the process $S=\{S(t);t\geq 0\}$ defined by $S(t) = \frac{\<x,\Theta(t)\>}{|x|}$.
The process $S$ describes the cosine of the angle between the starting point $x$ and the spherical Brownian motion $\Theta$. Consequently, the cosine between the starting point $x$ and $W(t)$ is just $S(A(t))$.
The skew-product representation and the previous given considerations imply that $S$ is independent from the Bessel process $R^{(\nu)}$ and the generator of $S$ is given by
\begin{eqnarray}
   \label{S:generator}
   \mathcal{G} = \frac{1-t^2}{2}\frac{\partial^2}{\partial\,t^2} - \frac{n-1}{2}\,t\,\frac{\partial}{\partial\,t}
\end{eqnarray}
with a domain $D_\mathcal{G} = \{u\in\mathcal{C}^2[-1,1];\, u'(-1)=u'(1)=0\}$. Three basic characteristics of the diffusion: the speed measure, the scale function and the killing measure are described by the following relations (see also \cite{BMR2:2010}) $m(dx) = 2(1-x^2)^{(n-3)/2}dx$, $s'(x) = (1-x^2)^{(1-n)/2}$, $k(dx)=0$. Moreover, the points $-1$ and $1$ are non-singular reflecting points. We denote by $p_t^S(x,y)$ the transition density function with respect to the speed measure, i.e.
\begin{eqnarray}
   \label{S:ptxy:definition}
   P^x(S(t)\in A) = \int_A p_t^S(x,y)m(dx)\/,\quad A \in \mathcal{B}[-1,1]\/.
\end{eqnarray}

\bibliography{bibliography}

\begin{thebibliography}{10}

\bibitem{AbramowitzStegun:1972}
M.~Abramowitz and I.~A. Stegun.
\newblock {\em Handbook of Mathematical Functions with Formulas, Graphs, and
  Mathematical Tables}.
\newblock Dover, New York, 9th edition, 1972.

\bibitem{AliliPatie:2005}
L.~Alili, P.~Patie, and J.~L. Pedersen.
\newblock Representations of the first hitting time density of an
  {Ornstein}-{Uhlenbeck} process.
\newblock {\em Stoch. Models}, 21:967--980, 2005.

\bibitem{BorodinSalminen:2002}
A.~N. Borodin and P.~Salminen.
\newblock {\em Handbook of Brownian Motion - Facts and Formulae}.
\newblock Birkhauser Verlag, Basel, 2 edition, 2002.

\bibitem{BGS:2007}
T.~Byczkowski, P.~Graczyk, and A.~Stos.
\newblock Poisson kernels of half-spaces in real hyperbolic spaces.
\newblock {\em Rev. Mat. Iberoamericana}, 23(1):85--126, 2007.

\bibitem{BM:2007}
T.~Byczkowski and J.~Ma\l{}ecki.
\newblock Poisson kernel and {Green} function of the ball in real hyperbolic
  spaces.
\newblock {\em Potential Anal.}, 27(1):1--26, 2007.

\bibitem{BMR2:2010}
T.~Byczkowski, J.~Ma{\l}ecki, and M.~Ryznar.
\newblock Hitting half-spaces by {Bessel}-{Brownian} diffusions.
\newblock {\em Potential Anal.}, 33:47--83, 2010.

\bibitem{BMR3:2011}
T.~Byczkowski, J.~Ma{\l}ecki, and M.~Ryznar.
\newblock Hitting times of {Bessel} processes.
\newblock {\em arXiv:1009.3513}, Preprint 2011.

\bibitem{BMZ:2010}
T.~Byczkowski, J.~Ma{\l}ecki, and T.~\.Z{}ak.
\newblock {Feynman}-{Kac} formula, $\lambda$--{Poisson} kernels and
  $\lambda$-{Green} functions of half-spaces and balls in hyperbolic spaces.
\newblock {\em Coll. Math.}, Preprint 2009.

\bibitem{BR:2006}
T.~Byczkowski and M.~Ryznar.
\newblock Hitting distibution of geometric {Brownian} motion.
\newblock {\em Studia Math.}, 173(1):19--38, 2006.

\bibitem{CranstonZhao:1987}
M.~Cranston and Z.~Zhao.
\newblock Conditional transformation of drift formula and potential theory for
  {$\frac12\Delta+b\nabla$}.
\newblock {\em Commun. Math. Phys.}, 112:613--625, 1987.

\bibitem{D:1990}
D.~Dufresne.
\newblock The distribution of a perpetuity, with application to risk theory and
  pension funding.
\newblock {\em Scand. Actuar. J.}

\bibitem{Erdelyi:1955}
Erdelyi et~al.
\newblock {\em Higher Transcendental Functions}, volume~II.
\newblock McGraw-Hill, New York, 1953-1955.

\bibitem{Folland:1992}
G.~B. Folland.
\newblock {\em Fourier Analysis and its applications}.
\newblock Wadsworth and Brooks, Pacific Grove (California), 1992.

\bibitem{GraczykJakubowski:2008}
P.~Graczyk and T.~Jakubowski.
\newblock Exit times and {Poisson} kernels of the {Ornstein}-{Uhlenbeck}
  diffusion.
\newblock {\em Stoch. Models}, 24(2), 2008.

\bibitem{GradsteinRyzhik:2007}
I.~S. Gradstein and I.~M. Ryzhik.
\newblock {\em Table of integrals, series and products. 7th edition}.
\newblock Academic Press, London, 2007.

\bibitem{HM:2011}
Y.~Hamana and H.~Matsumoto.
\newblock The probability distributions of the first hitting times of {Bessel}
  processes.
\newblock {\em arXiv:1106.6132}, Preprint 2011.

\bibitem{IW}
N.~Ikeda and S.~Watanabe.
\newblock {\em Stochastic Differential Equations and Diffusion Processes}.
\newblock North-Holland, 1981.

\bibitem{Lebedev:1972}
N.~N. Lebedev.
\newblock {\em Special functions and their applications}.
\newblock Dover Publications, New York, 1972.

\bibitem{Yor:1992}
M.~Yor.
\newblock {\em Some Aspects of {Brownian} Motion, {Part} I: {Some} Special
  Functional}.
\newblock Birkha\"auser, Basel, Boston, London, 1992.

\end{thebibliography}
\bibliographystyle{plain}

\end{document}